\documentclass[a4paper,11pt]{article}

\usepackage[pagewise]{lineno}\nolinenumbers

\usepackage{amsmath,amssymb,mathrsfs}
\usepackage{amsthm}
\usepackage{dsfont}
\usepackage{graphicx}
\usepackage{empheq}
\usepackage{faktor}
\usepackage{enumitem}
\usepackage{hyperref}
\usepackage{mathtools}
\usepackage{etoolbox}

\usepackage{blkarray}

\usepackage{tikz-cd}
\tikzcdset{ampersand replacement=\&}
\tikzcdset{row sep/normal=2cm}
\tikzcdset{column sep/normal=2.5cm}

\usepackage{geometry}
\geometry{hmargin=2.5cm,vmargin=2.5cm}

\usepackage[english]{babel}
\usepackage[T1]{fontenc}
\usepackage[utf8]{inputenc}

\DeclareFontFamily{U}{matha}{\hyphenchar\font45}
\DeclareFontShape{U}{matha}{m}{n}{
<-6> matha5 <6-7> matha6 <7-8> matha7
<8-9> matha8 <9-10> matha9
<10-12> matha10 <12-> matha12
}{}
\DeclareSymbolFont{matha}{U}{matha}{m}{n}

\DeclareFontFamily{U}{mathx}{\hyphenchar\font45}
\DeclareFontShape{U}{mathx}{m}{n}{
<-6> mathx5 <6-7> mathx6 <7-8> mathx7
<8-9> mathx8 <9-10> mathx9
<10-12> mathx10 <12-> mathx12
}{}
\DeclareSymbolFont{mathx}{U}{mathx}{m}{n}

\DeclareMathDelimiter{\vvvert} {0}{matha}{"7E}{mathx}{"17}%

\DeclarePairedDelimiterX{\normiii}[1]
{\vvvert}
{\vvvert}
{\ifblank{#1}{\:\cdot\:}{#1}}

\usepackage[backend=biber,natbib=true,style=numeric,doi=false]{biblatex}


\newcommand{\dis}{\displaystyle}

\newcommand{\p}{\partial}
\newcommand{\beq}{\begin{equation}}
\newcommand{\eeq}{\end{equation}}
\renewcommand{\epsilon}{\varepsilon}

\renewcommand{\leq}{\leqslant}
\renewcommand{\geq}{\geqslant}
\renewcommand{\d}{\mathrm{d} }
\newcommand{\R}{\mathbb{R}}
\newcommand{\N}{\mathbb{N}}

\newcommand{\ddt}{\frac{\p}{\p t}}
\newcommand{\supp}{\mathrm{supp \,}}

\newcommand{\M}{\mathcal{M}}
\newcommand{\A}{\mathcal A}
\newcommand{\e}{\mathrm e}

\newcommand{\B}{\mathfrak{B}}

\newcommand{\1}{\mathds 1}
\newcommand{\Cr}{\mathcal{C}}
\newcommand{\muin}{\mu^{\mathrm{in}}}

\newtheorem{defi}{Definition}
\newtheorem{theorem}{Theorem}
\newtheorem*{theorem*}{Theorem}
\newtheorem{prop}[theorem]{Proposition}
\newtheorem{lem}[theorem]{Lemma}
\newtheorem{coro}[theorem]{Corollary}
\newtheorem*{remark}{Remark}

\title{Asymptotic behaviors of a kinetic approach to the collective dynamics of a rock-paper-scissors binary game}

\author{Hugo Martin \thanks{Universit\'e Rennes 1 $\&$ Institut d'Agro, Rennes, France. Email: hugo.martin1@polytechnique.edu}}

\begin{document}

\maketitle

\begin{abstract}
This article studies the kinetic dynamics of the rock-paper-scissors binary game in a measure setting given by a non local and non linear integrodifferential equation. After proving the wellposedness of the equation, we provide a precise description of the asymptotic behavior in large time. To do so we adopt a duality approach, which is well suited both as a first step to construct a measure solution by mean of semigroups and to obtain an explicit expression of the asymptotic measure. Even thought the equation is non linear, this measure depends linearly on the initial condition. This result is completed by a decay in total variation norm, which happens to be subgeometric due to the nonlinearity of the equation. This relies on an unusual use of a confining condition that is needed to apply a Harris-type theorem, taken from a recent paper~\cite{Canizo2021} that also provides a way to compute explicitly the constants involved in the aforementioned decay in norm.
\end{abstract}

\

\noindent{\bf Keywords:} kinetic equations, binary games, measure solutions, nonlinearity, long-time behavior, explicit limit, subgeometric convergence rate.

\

\noindent{\bf MSC 2010:} Primary: 45K05, 35B40, 35R06; Secondary: 91A05 

\section{Introduction}

In a recent article~\cite{Duteil2021}, Pouradier Duteil and Salvarani introduced a kinetic equation to describe a large population of agents that interact by mean of random encounters and wealth exchange. Transcient pairs are formed with probability $\eta$, then if both of the agents are rich enough (in a sense to be clarified below), they play a game of rock-paper-scissors, and based on the result possibly exchange money. If the outcome of the game is a draw, then the transcient pair is unmade without change in the players' wealth. Otherwise, the winner receives a fixed quantity $h>0$ from the other player. The payoff matrix of player 1 for this zero-sum rock-paper-scissors game is given by
\[
\begin{blockarray}{ccccc}
&  & R & P & S \\
\begin{block}{cc(ccc)}
  R & & 0 & h & -h  \\
  P & & -h & 0 & h  \\
  S & & h & -h & 0  \\
\end{block}
\end{blockarray}
 \]
Standard results from game theory state that the optimal strategy is in this case a mixed strategy and a Nash equilibrium, that is selecting at random and uniformly one of the three moves \cite{Hofbauer1998}.
\\
The population of players, structured in wealth, is described by a distribution function $u=u(t,y)$ defined on $\R_+^2$. Given a subset $A\subset\R_+$, the integral
\[
\int_A u(t,y)\d y
\]
represents the number of individuals whose wealth belongs to $A$. Denoting $u^\mathrm{in}$ the initial wealth distribution, this model reads
\begin{equation}
\left\{\begin{array}{l}
\dis\p_t u(t,y) = \frac{\eta}{3}\int_h^\infty u(t,y')\d y' \left[u(t,y-h)\1_{[2h,\infty)}(y) + u(t,y+h)\1_{[0,\infty)}(y) -2u(t,y)\1_{[h,\infty)}(y)\right]\label{eq:pde} \vspace{4mm}\\
u(0,y) = u^\mathrm{in}(y), \qquad \forall\, y\geq 0.
\end{array}\right.
\end{equation}
The intensity of the exchange between players is proportional to the integral
\[\dis \int_h^\infty u(t,y')\d y'\]
which makes this equation non linear. This term comes from the rule that forbids debts, implying that players involved  in a transcient pair only play the game if they own at least $h$.
\\
The authors of the aforementionned article provided (among other results) wellposedness of this equation in a $\mathrm{L}^1$ setting as well as its behavior as $h$ vanishes under the diffusive scalling $t\leftarrow t/h^2$. The large time asymptotics when $h$ remains fixed was yet to be investigated: so is the purpose of the present paper. Our goal is to provide a precise asymptotic behavior to this equation, drawing inspiration from the methodology developped in~\cite{Gabriel2022} for a critical case of the growth-fragentation equation. In this article, the authors worked in a measure framework and adopted a combination of semigroup and duality approach. They obtained both an uniform exponential decay using the results from~\cite{Hairer2011} and a formula for the invariant probability measure, that was explicit in term of direct and adjoint eigenvector of the growth-fragmentation equation (see~\cite{DG} for a rich survey as well as very general assumptions ensuring the existence of such functions for this equation). This equation is non local yet linear, unlike the one studied in the present article which is both non local and non linear. Such feature can arguably be considered as the hallmark of the lost of exponential relaxation toward a stationary solution, that is indeed verified in the present case, see Theorem~\ref{thm:main} below. Taking advantage of the particular expression of Equation~\eqref{eq:pde}, an appropriate time rescaling enables to use results from the recent paper~\cite{Canizo2021} from Ca\~nizo and Mischler.
\\
The remaining of the paper is organized as follows. In the next section, we introduce the framework that is required to solve Equation~\eqref{eq:pde} in the sense of measures and state our main result. Our methodoloy relies on a duality approach, so Section~\ref{sec:dual} is devoted to the study of the adjoint equation. In Section~\ref{sec:measure}, we build on previous results to construct a measure solution to Equation~\eqref{eq:pde}. Section~\ref{sec:asympt} contains a precise description of the asymptotic behavior of the solution. These results are illustrated by numerical simulations in Setion~\ref{sec:num}. Finally in Section~\ref{sec:con}, we propose some possible continuations of this paper.

\section{Preliminaries and the main result}

We start by recalling briefly the notions from measure theory that we need to establish our results. For a more complete introduction on this field, we refer the reader to~\cite{Rudin1998} in which the focus is on the total variation norm, and the recent book~\cite{Duell2021} for a rich exposition on measure solutions to PDEs, in particular using the topology of the \textit{flat norm} or \textit{dual bounded Lipschitz norm}.
\\
We endow $\R_+ = [0,+\infty)$ with its standard topology and the associated Borel $\sigma-$algebra $\B(\R_+)$. Throughout the paper, we shall consider discrete subsets of $\R_+$ and unions of such sets, so for a subset $\Omega \subset \R_+$,  we denote by
$\M(\Omega)$ the space of real-valued Radon measures with Hahn-Jordan decomposition $\mu = \mu_+ - \mu_-$ on $\Omega$ such that 
\[
\|\mu\| := \int_\Omega \d |\mu| < \infty
\]
where $|\mu| = \mu_+ + \mu_-$ is the total variation measure of $\mu$. To obtain the desired asymptotic results, we shall work in spaces of weighted measures. For a measurable (weight) function $V : \Omega \to [1, \infty)$, we denote by $\M_V(\Omega)$ the subspace of finite signed measures $\mu$ on $\Omega$ such that
\[
\|\mu\|_V = \int_\Omega V\d |\mu| < \infty,
\]
and simply $\|\cdot\|$ whenever $V\equiv 1$.
\\
Now we denote $\mathcal{B}_V(\Omega)$ the space of Borel functions $f : \Omega \to \R_+$ such that
\[\|f\|_{\mathcal{B}_V(\Omega)} := \sup_{y\in\Omega}\frac{|f(y)|}{V(y)}  < \infty.\] If a function $f\in\mathcal{B}_V(\Omega)$ is also continuous, we denote $f\in\mathcal{C}_V(\Omega)$. In the case $V \equiv 1$, we simply write $\|\cdot\|_\infty$ for this norm, and more generally omit the index $V$.
For every $\mu\in\M_V(\Omega)$, one can define a linear form on $\mathcal{B}_V(\Omega)$ through the duality bracket
\[
f \mapsto \langle \mu,f\rangle := \int_\Omega f \d\mu.
\]
With a slight abuse of notation, for a measurable set $A$, we will write $\mu(A)$ instead of $\langle \mu,\mathds{1}_A\rangle$. The norm $\|\cdot\|_V$ can be expressed as
\[
\|\mu\|_{V} = \sup_{\|f\|_{\mathcal{B}_V(\Omega)} \leq 1} \langle \mu,f\rangle.
\]
\\
Now we define a weaker norm on the space of measures. First, for a function $f$ continuous on $\Omega$, we define
\[|f|_{Lip} := \sup_{y\neq z}\frac{|f(y) - f(z)|}{|y-z|}.\] Then we can define the Lipschitz bounded norm
\[\|f\|_{BL(V)} := \|f\|_{\mathcal{B}_V(\Omega)} + |f|_{Lip}.\] The dual bounded Lipschitz norm is thus defined as
\[\|\mu\|_{BL^*(V)} := \sup \left\{\int_\Omega f\d \mu : f\in\mathcal{C}(\Omega),\, \|f\|_{BL(V)}\leq 1\right\}\]
with $\mathcal{C}(\Omega)$ denoting the set of continuous functions on $\Omega$. Since the supremum is taken on a smaller set, it is clear that for any measure $\mu$, one has $\|\mu\|_{BL^*(V)} \leq \|\mu\|_{V}$. In particular, one has $\|\delta_y - \delta_z\| = 2$ if $y\neq z$ but $\|\delta_y - \delta_z\|_{BL^*(1)} = \min(1,|y-z|)$, so $(\M(\Omega),\|\cdot\|_{BL^*})$ enjoys better topological properties that $(\M(\Omega),\|\cdot\|_{TV})$.
\\
It remains to define a notion of measure solutions to Equation\eqref{eq:pde}. We choose an equation of the "mild" type, in the sense that it relies on an integration in time. Let us give our motivation for this choice. Assume that $u(t,y) \in
\Cr^1(\R_+; \mathrm{L}^1(\R_+))$ satisfies~\eqref{eq:pde} in the classical sense. Integrating this equation multiplied by $f\in\mathcal{B}(\R_+)$, we obtain
\begin{align*}
\int_0^\infty f(y)u(t,y)&\d y = \int_0^\infty f(y)u^{\mathrm{in}}(y)\d y \\
+& \int_0^t \int_0^\infty \frac{\eta}{3}\left(\int_h^\infty u(z,s)\d z\right)\left[f(y+h) + f(y-h) - 2f(y)\right]\mathds{1}_{[h,\infty)}(y)u(s,y)\d y\, \d s.
\end{align*}
In order to state the equivalent of this equation for a general signed measure, we define the operator $\A$ on $\mathcal{B}(\R_+)$ by
\[
\A f : y\mapsto \left[f(y+h) + f(y-h) - 2f(y)\right]\mathds{1}_{[h,\infty)}(y).
\]
\begin{defi}\label{def:sol}
A family $(\mu_t)_{t\geq 0}\subset \M(\R_+)$ with initial condition $\muin$ is called a measure solution to Equation~\eqref{eq:pde} if for all $f\in\mathcal{B}(\R_+)$ one has
\begin{equation}\label{eq:meas_const}
\langle\mu_t, f\rangle = \langle\muin, f\rangle + \int_0^t \left\langle\mu_s, \frac{\eta}{3}\mu_s([h,\infty))\A f\right\rangle\d s.
\end{equation}
\end{defi}
We introduce now another slight abuse of notation. For a measurable set $A\subset \R_+$ and a positive real number $a$, we denote
\[A + a := \left\{y\in \R_+ : \exists x\in A,\, y = x + a\right\}.\]
Finally, we are ready to state the main result of this paper, which is about the wellposedness of Equation~\eqref{eq:pde} in the measure setting, as well as the asymptotic bahaviour of the solution, when the exchange parameter $h$ remains fixed.
\begin{theorem}\label{thm:main}
For any initial condition $\muin \in \M(\R_+)$, there exists a unique measure solution $(\mu_t)_{t\geq 0}$ to Equation~\eqref{eq:pde} in the sense of Definition~\ref{def:sol}, a projection operator $\mathcal{P}_h$ defined on $\M(\R_+)$ and constants $C,\lambda > 0$ independant of $\eta$ and $\muin([h,\infty))$ such that
\begin{equation}\label{decay}
\forall t\geq 0, \qquad\left\|\mu_t - \muin \mathcal{P}_h\right\| \leq \frac{C}{\left(1+\frac{\eta\muin([h,\infty))}{3}t\right)^\lambda}\left\|\muin - \muin \mathcal{P}_h\right\|.
\end{equation}
The constants $C$ and $\lambda$ can be computed explicitely, and the measure $\muin \mathcal{P}_h$ is given explicitely in terms of the initial condition $\muin$ and the exchange parameter $h$:
\begin{itemize}
\item $\supp \muin \mathcal{P}_h \subset [0,h)$
\item for all measurable set $A\subset [0,h)$
\[\muin \mathcal{P}_h(A) = \sum_{k=0}^\infty \muin\left(A + kh\right).\]
\end{itemize}
\end{theorem}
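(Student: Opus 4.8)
The plan is to isolate the nonlinearity into a single scalar factor and remove it by a time change, reducing \eqref{eq:meas_const} to a \emph{linear} evolution driven by $\A$. Testing \eqref{eq:meas_const} against $f\equiv 1$ and using $\A 1=0$ shows that the total mass $\|\mu_t\|=\|\muin\|$ is conserved; testing against $\1_{[h,\infty)}$ and using $\A\1_{[h,\infty)}=-\1_{[h,2h)}$ shows that $m(t):=\mu_t([h,\infty))$ is nonincreasing, hence $m(t)\in[0,\muin([h,\infty))]$. Setting $\tau(t):=\frac{\eta}{3}\int_0^t m(s)\,\d s$, the rescaled family $\nu_\tau:=\mu_{t(\tau)}$ then formally solves the autonomous linear equation $\frac{\d}{\d\tau}\langle\nu_\tau,f\rangle=\langle\nu_\tau,\A f\rangle$, i.e. is governed by the dual semigroup studied in Section~\ref{sec:dual}.

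For existence and uniqueness I would argue directly on the mild identity \eqref{eq:meas_const}. As $\A$ is bounded on $\mathcal{B}(\R_+)$ with $\|\A f\|_\infty\leq 4\|f\|_\infty$, the right-hand side of \eqref{eq:meas_const} defines, along a given curve $(\mu_s)$, a map whose nonlinear coefficient $\frac{\eta}{3}\mu_s([h,\infty))$ is Lipschitz in the unknown on bounded sets; a Banach fixed-point argument in $\mathcal{C}([0,T];(\M(\R_+),\|\cdot\|))$ then yields a unique local solution, and the conserved mass $\|\mu_t\|=\|\muin\|$ provides the uniform bound that upgrades this to a unique global solution. This also justifies the change of variables $\tau$ rigorously and carries uniqueness over to the linear problem for $\nu_\tau$.

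To identify the limit I would exploit that $\A$ preserves each set $\{r+kh:k\geq 0\}$ with $r\in[0,h)$ and acts there as the generator of a symmetric nearest-neighbour random walk that is frozen at $k=0$, where $y=r<h$ and the factor $\1_{[h,\infty)}$ annihilates $\A$; thus the bottom site is absorbing. Because a one-dimensional symmetric walk is recurrent, all the fiberwise mass of $\nu_\tau$ migrates to that site as $\tau\to\infty$ while the total mass of each fiber is preserved, which pins down the invariant measure: for $A\subset[0,h)$ its value is $\sum_{k=0}^\infty\muin(A+kh)$. These sums converge since the sets $A+kh$ are disjoint and $\sum_k|\muin|(A+kh)\leq\|\muin\|$; the resulting map $\mathcal{P}_h$ fixes every measure supported in $[0,h)$ and is therefore idempotent, it depends linearly on $\muin$, and $\supp\muin\mathcal{P}_h\subset[0,h)$. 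This is exactly the operator of the statement.

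The quantitative decay is the heart of the matter and where I expect the main obstacle. I would apply the subgeometric Harris theorem of~\cite{Canizo2021} to the linear $\tau$-dynamics, with a weight $V$ measuring the fiber index $k$; the discrete-Laplacian structure of $\A$ makes $\A V$ a second difference, so a concave choice such as $V\sim(1+y/h)^\theta$ with $\theta\in(0,1)$ yields a confining inequality $\A V\leq-\varphi(V)$ with a sublinear, concave $\varphi$ outside a finite small set, on which a minorization (Doeblin) bound holds since the walk communicates there. This produces explicit subgeometric decay of $\|\nu_\tau-\muin\mathcal{P}_h\|$. The genuinely delicate step, and the ``unusual'' use of the confining condition alluded to in the abstract, is transporting this back through the solution-dependent time change: because $\supp\muin\mathcal{P}_h\subset[0,h)$ one has $m(t)=(\mu_t-\muin\mathcal{P}_h)([h,\infty))\leq\|\mu_t-\muin\mathcal{P}_h\|$, so the decay of the deviation feeds back into $\tau'(t)=\frac{\eta}{3}m(t)$; closing this self-consistent loop by a Gronwall argument is what both caps the effective exponent $\lambda$ below $1$—whence the subgeometric, rather than exponential as in the linear situation of~\cite{Gabriel2022}, convergence—and reconstitutes the explicit scaling $1+\frac{\eta\muin([h,\infty))}{3}t$, with $C$ and $\lambda$ independent of $\eta$ and $\muin([h,\infty))$ precisely because these enter only through $\tau$.
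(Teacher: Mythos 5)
Your overall architecture --- a fixed point for wellposedness, the time change $\tau=\theta(t)=\frac{\eta}{3}\int_0^t\mu_s([h,\infty))\,\d s$ reducing the problem to the autonomous semigroup generated by $\A$, fiberwise analysis on the classes $\mathfrak{C}_x$, and the identification of $\mathcal{P}_h$ --- is the paper's. But the quantitative step, which you correctly flag as the heart of the matter, is wrong on both counts. First, the linear $\tau$-dynamics does \emph{not} call for a subgeometric Harris theorem with an unbounded concave weight: your candidate $V\sim(1+y/h)^\theta$ gives a second difference $\A V\leq -c\,y^{\theta-2}$, i.e. $\A V\leq-\varphi(V)$ with $\varphi$ \emph{decreasing}, which is not the admissible shape for the subgeometric results of~\cite{Canizo2021}. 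The paper instead applies the \emph{geometric} Harris theorem on each fiber with the bounded weight $V(y)=2-\e^{-\alpha(x)y}$, $\alpha(x)=\frac{2\log 2}{2x+h}$, tuned so that the sublevel set $\{V(y)+V(z)\leq 3\}$ reduces to the pairs $(x,x)$ and $(x,x+h)$ (this is the ``unusual'' confining condition of the abstract); the linear semigroup then decays \emph{exponentially} in $\tau$, with constants uniform in $x\in[0,h)$.

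Second, and more seriously, your mechanism for returning from $\tau$ to $t$ points the wrong way. The inequality $m(t)\leq\|\mu_t-\muin\mathcal{P}_h\|$ bounds $\theta(t)$ from \emph{above}; it can only show that convergence is no faster than some rate, never the claimed upper bound \eqref{decay}. What is needed is a \emph{lower} bound on $m(t)$. The paper imports $\mu_t([h,\infty))\geq\muin([h,\infty))\big/\bigl(1+\tfrac{\eta\muin([h,\infty))}{3}t\bigr)$ from~\cite{Duteil2021} (it also follows from your own observation $\A\1_{[h,\infty)}=-\1_{[h,2h)}$, which yields $\frac{\d}{\d t}m=-\frac{\eta}{3}m\,\mu_t([h,2h))\geq-\frac{\eta}{3}m^2$), integrates it to $\theta(t)\geq\log\bigl(1+\tfrac{\eta\muin([h,\infty))}{3}t\bigr)$, and substitutes into $C\e^{-\lambda\theta(t)}$ to obtain the polynomial rate; there is no cap $\lambda<1$ in the theorem, $\lambda$ being whatever the geometric Harris constants give (about $0.12$ in the paper). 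A lesser point: for signed $\muin$ the identity $\|\mu_t\|=\|\muin\|$ is not mass conservation; the paper obtains $\|\mu_t\|\leq\|\muin\|$ from the contraction property of the dual semigroup, which is also what guarantees that the fixed-point iterate is a genuine countably additive measure.
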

\begin{remark}
A particular feature of Equation~\eqref{eq:pde} is the conservation of the population and total wealth.
Indeed, a formal integration against the measures $\d y$ and $y \d y$ over $[0, \infty)$ leads to the
balance laws
\[\frac{\d}{\d t}\int_0^\infty u(t,y)\d y = \frac{\d}{\d t}\int_0^\infty yu(t,y)\d y = 0.\]
In the language of measures, this translates by $\mu_t(\R_+) = \muin(\R_+)$ and $\langle \mu_t,Id\rangle = \langle \muin,Id\rangle$ with $Id$ the identity function, provided $\muin$ as a finite first moment. If $(\mu_t)_{t\geq 0}$ is a measure solution, then the first equality is satisfied by definition, since $\mathds{1}_{\R_+}$ lies in $\mathcal{B}(\R_+)$. If in addition $\langle \muin,Id\rangle$ is finite, one can define the formula~\eqref{eq:meas_const} on test functions $f\in\mathcal{B}_{1+Id}(\R_+)$, \textit{i.e.} functions that are Borel and satisfy
\[
\sup_{y\in\Omega} \frac{|f(y)|}{1+y} < \infty.
\]
The function $Id$ lies in this set and satisfies $\A f = 0$. Thus the equality $\langle \mu_t,Id\rangle = \langle \muin,Id\rangle$ is satisfied, meaning that the total wealth is conserved at any finite time $t$.
\end{remark}
Now, we make a statement about the behavior of the `asymptotic in time' measure $\muin\mathcal{P}_h$ when $h$ vanishes.
\begin{theorem}
Denoting $\mathcal{P}_0$ the linear operator acting on measures defined by $\mu \mathcal{P} := \mu(\R_+)\delta_0$, one has
\[\normiii{\mathcal{P}_h - \mathcal{P}_0}_{BL^*} \leq h\]
with $\normiii{\cdot}_{BL^*}$ the operator norm on $(\M(\R_+),\|\cdot\|_{BL^*})$ defined by
\[\normiii{\mathcal{P}}_{BL^*} := \sup_{\|\mu\|_{BL^*} \leq 1} \frac{\|\mu\mathcal{P}\|_{BL^*}}{\|\mu\|_{BL^*}}.\]
\end{theorem}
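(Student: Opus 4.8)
The plan is to realise both $\mathcal{P}_h$ and $\mathcal{P}_0$ as push-forwards of the argument measure along explicit maps of $\R_+$, and then to read off the dual bounded Lipschitz norm through these maps. Let $\tau:\R_+\to[0,h)$ be the folding map $\tau(y)=y-h\lfloor y/h\rfloor$. The defining identity $\muin\mathcal{P}_h(A)=\sum_{k\ge 0}\muin(A+kh)$ for $A\subset[0,h)$ says precisely that the translates $A+kh$, $k\ge 0$, enumerate the disjoint slices of $\tau^{-1}(A)$, so that $\mu\mathcal{P}_h=\tau_{\#}\mu$. Similarly $\mu\mathcal{P}_0=\mu(\R_+)\delta_0=(c_0)_{\#}\mu$, where $c_0\equiv 0$ is the constant map. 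First I would record these two representations and emphasise their common source: both measures are images of the \emph{same} $\mu$, only along two maps that differ pointwise by at most $h$, since $|\tau(y)-c_0(y)|=\tau(y)<h$ for every $y$. This uniform displacement bound is the structural fact driving the whole estimate.

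Next I would unfold the operator norm. It suffices to bound, for every $\mu$ with $\|\mu\|_{BL^*}\le 1$ and every $f\in\mathcal{C}(\R_+)$ with $\|f\|_{BL(1)}=\|f\|_\infty+|f|_{Lip}\le 1$, the quantity
\[
\langle\mu(\mathcal{P}_h-\mathcal{P}_0),f\rangle=\langle\tau_{\#}\mu-(c_0)_{\#}\mu,\,f\rangle=\int_0^\infty\bigl(f(\tau(y))-f(0)\bigr)\,\d\mu(y)
\]
by $h$. The computational heart is the pointwise Lipschitz estimate: because $\tau(y)\in[0,h)$ and $|f|_{Lip}\le 1$,
\[
\bigl|f(\tau(y))-f(0)\bigr|\le |f|_{Lip}\,\tau(y)\le h\qquad\text{for all }y\ge 0,
\]
so the integrand is controlled by $h$ uniformly in $y$ and in the admissible $f$.

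The main obstacle is to turn this uniform pointwise control into a bound by $h\|\mu\|_{BL^*}$, and not merely by $h\|\mu\|$: the folded integrand $y\mapsto f(\tau(y))-f(0)$ is not itself Lipschitz, because $\tau$ jumps at the grid $h\N$, so it is not an admissible competitor for $\|\cdot\|_{BL^*}$ and cannot be fed back naively into the duality. This is exactly where I would exploit the common-source structure rather than a crude triangle inequality: I would keep $\tau_{\#}\mu$ and $(c_0)_{\#}\mu$ coupled through their shared preimage $y$, so that their difference is registered by the genuine displacement $|\tau(y)-0|\le h$ between the images of each source point, and feed the resulting transport into the Kantorovich--Rubinstein representation of the dual bounded Lipschitz norm; optimising over $|f|_{Lip}\le 1$ then returns the factor $h$ and yields $\langle\mu(\mathcal{P}_h-\mathcal{P}_0),f\rangle\le h$, hence $\normiii{\mathcal{P}_h-\mathcal{P}_0}_{BL^*}\le h$. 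I expect the reconciliation of the folded push-forward with the dual norm — keeping the factor $h$ rather than the truncation diameter $1$, and handling atoms sitting on the grid $h\N$ where $\tau$ is discontinuous — to be the delicate point. I would calibrate it on single Dirac masses $\mu=\delta_y$, for which $\|\delta_y(\mathcal{P}_h-\mathcal{P}_0)\|_{BL^*}=\min(1,\tau(y))\le h=h\|\delta_y\|_{BL^*}$, to confirm that $h$ is the correct constant before writing out the general coupling.
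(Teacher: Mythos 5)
Your first half coincides, modulo vocabulary, with the paper's own proof: the paper works on the dual side, observing that $\mathcal{P}_hf(y)=f(\tau(y))$, so that $y\mapsto\mathcal{P}_hf(y)-\mathcal{P}_0f(y)$ is $h$-periodic and bounded pointwise by $|f|_{Lip}\,\tau(y)\leq h$; your push-forward identity $\mu\mathcal{P}_h=\tau_{\#}\mu$ encodes exactly the same fact. You are also right -- and more careful than the paper, whose proof ends abruptly with ``we deduce\dots and the proof is easily completed from here'' -- that the folded function $f\circ\tau-f(0)$ is not an admissible $BL$ test function because of its jumps on the grid $h\N$, so the pointwise estimate dualizes only into $|\langle\mu,f\circ\tau-f(0)\rangle|\leq h\|\mu\|$, with the \emph{total variation} norm on the right.

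The genuine gap is your proposed repair. The Kantorovich--Rubinstein/coupling step cannot return the factor $\|\mu\|_{BL^*}$: the natural coupling charges each source point $y$ with a displacement $\tau(y)\leq h$, and its cost is $\int_0^\infty\tau(y)\,\d|\mu|(y)\leq h\|\mu\|$ -- for signed $\mu$ one must pass through $|\mu|$ (or $\mu_\pm$), which destroys exactly the cancellations that make $\|\mu\|_{BL^*}$ small, so you land back on the TV bound you set out to improve. In fact no argument can close this gap, because the inequality $\|\mu(\mathcal{P}_h-\mathcal{P}_0)\|_{BL^*}\leq h\|\mu\|_{BL^*}$ is \emph{false} when $h<1$: take the dipole $\mu_n=n(\delta_{y+1/n}-\delta_y)$ with $y$ and $y+1/n$ in the same cell $[kh,(k+1)h)$; then $\mu_n\mathcal{P}_0=0$ since $\mu_n(\R_+)=0$, while $\mu_n\mathcal{P}_h=n(\delta_{\tau(y)+1/n}-\delta_{\tau(y)})$, and both $\|\mu_n\|_{BL^*}$ and $\|\mu_n\mathcal{P}_h\|_{BL^*}$ tend to $1$ as $n\to\infty$, so $\normiii{\mathcal{P}_h-\mathcal{P}_0}_{BL^*}\geq 1>h$. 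Your calibration on single Diracs could not detect this, because $\|\delta_y\|_{BL^*}=\|\delta_y\|=1$, so Diracs cannot distinguish a $TV$ right-hand side from a $BL^*$ one; dipoles are the decisive test. The statement that is actually provable -- and the one the paper's two-line proof establishes, if its $\normiii{\cdot}_{BL^*}$ is read as the operator norm from $(\M(\R_+),\|\cdot\|)$ into $(\M(\R_+),\|\cdot\|_{BL^*})$ -- is $\|\mu\mathcal{P}_h-\mu\mathcal{P}_0\|_{BL^*}\leq h\|\mu\|$, which is what the subsequent examples in the paper use and suffices for convergence on probability measures; your push-forward argument proves precisely this, and you should stop there rather than attempt the $TV\to BL^*$ upgrade.
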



\section{Dual equation and right semigroup\label{sec:dual}}

This section is devoted to the study of the wellposedness of a family of equations that are related to the adjoint equation. Assume $(\mu_t)_{t\geq 0}$ is the unique solution to~\eqref{eq:meas_const}. Then, the (backward) adjoint equation is given by
\begin{align}\label{eq:adj_const}
\ddt f(t,s,y) & = \frac{\eta}{3}\mu_t([h,\infty))\A f(t,s,y)\nonumber\\
& = \frac{\eta}{3}\mu_t([h,\infty))\left[f(t,s,y+h) + f(t,s,y-h) - 2f(t,s,y)\right]\mathds{1}_{[h,\infty)}(y),
\end{align}
with $(t,s,y)\in \R_+^3$ and $s\leq t$, supplemented with the terminal condition $f(t,t,y) = f_0(y)$. The second argument $s$ is included to account for the inhomogeneity in the time evolution, since the solution of~\eqref{eq:adj_const} depends on the values of $t\mapsto \mu_t([h,\infty))$. Due to the indicator function $\mathds{1}_{[h,\infty)}$, a solution $f$ to Equation~\eqref{eq:adj_const} satisfies $f(t,s,y) = f_0(y)$ for all $y\in [0,h)$ and $0\leq s \leq t$. On the interval $[h,\infty)$, we solve a \text{mild} version of this equation, that is obviously more complicated than on $[0,h)$. All in all, we search for a function $f$ that satisfies
\begin{align}\label{eq:mild}
f(t,s,y) =& f_0(y)\e^{-\frac{2\eta}{3}\mathds{1}_{[h,\infty)}(y)\int_s^t \mu_\sigma([h,\infty)) \d \sigma}\nonumber\\
 &\qquad + \frac{\eta}{3}\mathds{1}_{[h,\infty)}(y)
\int_s^t \mu_\sigma([h,\infty)) \e^{-\frac{2\eta}{3}\int_\sigma^t \mu_\tau([h,\infty)) \d \tau}[f(\sigma,s,y + h) + f(\sigma,s,y - h)]\d \sigma
\end{align}
for all $y>0$. In this form, this equation depends on the solution $(\mu_t)_{t\geq 0}$ of the direct problem, so we introduce related equations by replacing $\mu_\bullet([h,\infty))$ by a generic non negative and continuous $b$ function. The equation we study is then
\begin{align}\label{eq:mild_const}
f(t,s,y) =& f_0(y)\e^{-\frac{2\eta}{3}\mathds{1}_{[h,\infty)}(y)\int_s^t b(\sigma) \d \sigma}\nonumber\\
 &\qquad + \frac{\eta}{3}\mathds{1}_{[h,\infty)}(y)
\int_s^t b(\sigma) \e^{-\frac{2\eta}{3}\int_\sigma^t b(\tau) \d \tau}[f(\sigma,s,y + h) + f(\sigma,s,y - h)]\d \sigma.
\end{align}
In this section, we will solve Equation~\eqref{eq:mild_const} for a fixed function $b$. The wellposedness of this problem, as well as useful properties, are collected in the next proposition. First, we introduce some notations. For $\tilde{\Omega} \subset \R_+^2$ and $\Omega \subset \R_+$, we denote $\Cr\left(\tilde{\Omega},\mathcal{B}(\Omega)\right)$ the set of functions $f$ that are defined and continuous on $\tilde{\Omega}$ such that for all $(t,s)\in \tilde{\Omega}$, the function $f(t,s,\cdot)$ lies in $\mathcal{B}(\Omega)$. Similarly, we define a subset of the previous one, denoted $\Cr^1\left(\tilde{\Omega},\mathcal{B}(\Omega)\right)$ made of the functions such that $\p_t f$ and $\p_s f$ lie in $\Cr\left(\tilde{\Omega},\mathcal{B}(\Omega)\right)$. 
For $x\in[0,h)$ we define%
\[\mathfrak{C}_x := \left\{x+kh, k\in\N\right\}.\]
Since players can only gain or loose $h$ after each game, they shall remain in the same `class of wealth' at all time, depending on their initial wealth only. Such classes are precisely these sets $\mathfrak{C}_x$ for $x\in[0,h)$. Since any $f\in \mathcal{B}(\R_+)$ can be written as
\[
f = \sum_{x\in [0,h)} f_{|\mathfrak{C}_x}
\]
it is enough to prove properties on $\mathcal{B}(\Omega\times \mathfrak{C}_x)$, which we do for the next result.
\begin{prop}\label{prop:adj}
For all $f_0 \in \mathcal{B}(\mathfrak{C}_x)$ and function $b$ continuous, positive and non increasing, there exists a unique solution $f_b$ to Equation~\eqref{eq:mild_const} in $\Cr(\R_+^2,\mathcal{B}(\mathfrak{C}_x)) \cap \Cr^1([0,T]^2,\mathcal{B}(\mathfrak{C}_x))$ for any $T>0$.
Additionally
\begin{itemize}
\item[] \qquad for all $(t,s)\in \R_+^2$ and $x\in[0,h)$, $\|f_b(t,s,\cdot)\|_{\mathcal{B}(\mathfrak{C}_x)} \leq \|f_0\|_{\mathcal{B}(\mathfrak{C}_x)}$;
\item[] \qquad if $f_0 \geq 0$ then for all $t \geq s\geq 0$, $f_b(t,s,\cdot)\geq 0$;
\item[] \qquad if $f_0 = \mathds{1}_{\mathfrak{C}_x}$, then $f_b(t,s,\cdot) = \mathds{1}_{\mathfrak{C}_x}$ for all $(t,s)\in \R_+^2$,
\item[] \qquad if $b$ is non increasing, then for all $(t,s)\in \R_+^2$ and $x\in[0,h)$,\\ $\|f_b(t,s,\cdot)\|_{\Cr^1_x} \leq \left(\|f_0\|_{\mathcal{B}(\mathfrak{C}_x)} + 2b(0)\|\A f_0\|_{\mathcal{B}(\mathfrak{C}_x)}\right)$ and $f_b$ lies in $\Cr^1(\R_+^2,\mathcal{B}(\mathfrak{C}_x))$, with \\
\[
\|f\|_{\mathcal{C}^1_x} := \|f\|_{\mathcal{B}(\mathfrak{C}_x)} + \|\p_t f\|_{\mathcal{B}(\mathfrak{C}_x)} + \|\p_s f\|_{\mathcal{B}(\mathfrak{C}_x)}.
\]
\end{itemize}
\end{prop}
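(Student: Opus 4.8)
The plan is to read \eqref{eq:mild_const} as a fixed-point equation, solve it by a Banach contraction on each finite time horizon, and then extract every stated property either from the convex-combination (probabilistic) structure of the right-hand side or from the fact that the underlying operator $\A$ is bounded on $\mathcal{B}(\mathfrak{C}_x)$. For a fixed $T>0$ I would work on the Banach space $X_T:=\Cr(\{(t,s):0\leq s\leq t\leq T\},\mathcal{B}(\mathfrak{C}_x))$ with norm $\sup_{s\leq t\leq T}\|\cdot(t,s,\cdot)\|_{\mathcal{B}(\mathfrak{C}_x)}$, noting that on the countable grid $\mathfrak{C}_x$ this norm is just a supremum. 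Let $\Phi$ denote the map sending $f$ to the right-hand side of \eqref{eq:mild_const}; continuity of $b$ and dominated convergence make $\Phi f$ jointly continuous in $(t,s)$, so $\Phi$ maps $X_T$ into itself. The decisive estimate is that for two candidates $f,g$,
\[
\|\Phi f(t,s,\cdot)-\Phi g(t,s,\cdot)\|_{\mathcal{B}(\mathfrak{C}_x)} \leq \frac{2\eta}{3}\left(\int_s^t b(\sigma)\e^{-\frac{2\eta}{3}\int_\sigma^t b(\tau)\d\tau}\d\sigma\right)\|f-g\|_{X_T} = \left(1-\e^{-\frac{2\eta}{3}\int_s^t b}\right)\|f-g\|_{X_T},
\]
where the equality is the exact primitive of the kernel. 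Since $b\geq 0$ the prefactor is at most $1-\e^{-\frac{2\eta}{3}\int_0^T b}<1$, so $\Phi$ is a strict contraction and Banach's theorem gives a unique $f_b$ on the triangle $\{s\leq t\leq T\}$; uniqueness lets these glue into a single $f_b\in\Cr(\R_+^2,\mathcal{B}(\mathfrak{C}_x))$.

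The three pointwise properties I would read off from the fact that, for $y\geq h$, \eqref{eq:mild_const} expresses $f_b(t,s,y)$ as a convex combination of $f_0(y)$ (weight $\e^{-\frac{2\eta}{3}\int_s^t b}$) and of $f_b(\sigma,s,y\pm h)$ (total weight $1-\e^{-\frac{2\eta}{3}\int_s^t b}$, shared evenly), while for $y<h$ it reduces to $f_b(t,s,y)=f_0(y)$. Hence the closed ball of radius $\|f_0\|_{\mathcal{B}(\mathfrak{C}_x)}$ and the cone of nonnegative functions are both invariant under $\Phi$, which yields $\|f_b(t,s,\cdot)\|_{\mathcal{B}(\mathfrak{C}_x)}\leq\|f_0\|_{\mathcal{B}(\mathfrak{C}_x)}$ and the positivity claim by passing to the limit in the Picard iterates. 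Taking $f_0=\1_{\mathfrak{C}_x}$, the values $f_0(y)$ and $f_0(y\pm h)$ all equal $1$ on $\mathfrak{C}_x$, so $\1_{\mathfrak{C}_x}$ solves \eqref{eq:mild_const} and equals $f_b$ by uniqueness. None of these three uses monotonicity of $b$.

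The regularity and the $\Cr^1$ bound are where I expect the real work. Differentiating \eqref{eq:mild_const} in $t$ shows $f_b$ solves the nonautonomous ODE $\partial_t f_b=\frac{\eta}{3}b(t)\A f_b$ with datum $f_b(s,s,\cdot)=f_0$; since $\A$ is bounded on $\mathcal{B}(\mathfrak{C}_x)$ and commutes with the scalar $b(t)$, the function $\A f_b$ solves the \emph{same} equation with datum $\A f_0$. The supremum bound above, applied to $\A f_b$, then gives the key inequality $\|\A f_b(t,s,\cdot)\|_{\mathcal{B}(\mathfrak{C}_x)}\leq\|\A f_0\|_{\mathcal{B}(\mathfrak{C}_x)}$, so that $\|\partial_t f_b(t,s,\cdot)\|_{\mathcal{B}(\mathfrak{C}_x)}\leq\frac{\eta}{3}b(t)\|\A f_0\|_{\mathcal{B}(\mathfrak{C}_x)}\leq\frac{\eta}{3}b(0)\|\A f_0\|_{\mathcal{B}(\mathfrak{C}_x)}$, the last step being exactly where $b$ non-increasing is used. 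The genuinely delicate point is $\partial_s f_b$: differentiating the mild form in $s$ produces boundary terms at $\sigma=s$ together with a recursive source, and the cleanest way to close it is to record that $f_b$ is the two-parameter evolution family $\e^{\frac{\eta}{3}\left(\int_s^t b\right)\A}f_0$ (the time change $\theta(t)=\frac{\eta}{3}\int_0^t b$ makes the ODE autonomous, $\A$ being bounded). This representation gives $\partial_s f_b=-\frac{\eta}{3}b(s)\A f_b$ and hence the symmetric bound $\|\partial_s f_b(t,s,\cdot)\|_{\mathcal{B}(\mathfrak{C}_x)}\leq\frac{\eta}{3}b(0)\|\A f_0\|_{\mathcal{B}(\mathfrak{C}_x)}$. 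Summing the three contributions yields $\|f_b(t,s,\cdot)\|_{\Cr^1_x}\leq\|f_0\|_{\mathcal{B}(\mathfrak{C}_x)}+\frac{2\eta}{3}b(0)\|\A f_0\|_{\mathcal{B}(\mathfrak{C}_x)}\leq\|f_0\|_{\mathcal{B}(\mathfrak{C}_x)}+2b(0)\|\A f_0\|_{\mathcal{B}(\mathfrak{C}_x)}$ (using $\eta\leq 1$), uniformly in $(t,s)$, which simultaneously upgrades $f_b$ to $\Cr^1(\R_+^2,\mathcal{B}(\mathfrak{C}_x))$. The two obstacles to keep in mind are that the contraction constant degenerates to $1$ as $T\to\infty$, handled by gluing over horizons, and that the $s$-derivative is awkward from the mild form alone, which is precisely why the evolution-family viewpoint is the right tool for the final bullet.
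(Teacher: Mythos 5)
Your proof is correct, and while its skeleton (Banach fixed point on a finite horizon, then invariance of the ball and of the positive cone, then a separate argument for the $\Cr^1$ bound) matches the paper's, two key steps are handled by genuinely different means. First, for the contraction you integrate the kernel exactly, getting the constant $1-\e^{-\frac{2\eta}{3}\int_s^t b}<1$; the paper uses the cruder bound $\frac{2\eta}{3}\|b\|_\infty T$, which forces $T$ small and an iteration over subintervals, and then proves the sup-norm bound and positivity by separate invariance computations. Your sharper constant gives the contraction on every finite horizon at once \emph{and} exhibits the right-hand side as a convex combination, from which the sup-norm bound, positivity and the conservation of $\1_{\mathfrak{C}_x}$ all fall out of one observation. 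Second, for the last bullet the paper runs a second fixed-point argument in the $\Cr^1$ norm and an invariance argument that bounds $b(t)$ by $b(\sigma)$ inside the integral; you instead identify $f_b$ with $\exp\bigl(\frac{\eta}{3}(\int_s^t b)\,\A\bigr)f_0$ (legitimate, since $\A$ is bounded on $\mathcal{B}(\mathfrak{C}_x)$ and the generators at different times commute), so that $\A f_b(t,s,\cdot)=M_{s,t}^{(b)}\A f_0$ and the contraction applied to the datum $\A f_0$ gives $\|\p_t f_b\|,\|\p_s f_b\|\leq\frac{\eta}{3}b(0)\|\A f_0\|_{\mathcal{B}(\mathfrak{C}_x)}$ directly. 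This is cleaner, and as a bonus it yields for free the commutation $\A M_{s,t}^{(b)}=M_{s,t}^{(b)}\A$ that the paper later proves laboriously in Corollary~\ref{coro:semigroup} by testing against the functions $\1_{\{x+kh\}}$. Two small points to tidy up: your contraction estimate and convex-combination reading are only valid on the triangle $\{s\leq t\}$ (for $s>t$ the weights are no longer subprobabilities), whereas the statement is phrased on $\R_+^2$ and the paper's cruder Lipschitz bound covers the full square $[0,T]^2$; and the final step $\frac{2\eta}{3}b(0)\leq 2b(0)$ needs $\eta\leq 3$, which holds since $\eta$ is a probability, but you should say so explicitly (your bound is in fact sharper than the one stated).
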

\begin{proof}
Fix $b \in \Cr(\R_+)$ with the aforementionned properties and $T >0$. Let $\Gamma$ be the operator defined on $\mathcal{B}([0,T]^2\times\mathfrak{C}_x)$
\begin{align*}
\Gamma f(t,s,y) =& f_0(y)\e^{-\frac{2\eta}{3}\mathds{1}_{[h,\infty)}(y)\int_s^t b(\sigma) \d \sigma}\\
 &\qquad + \frac{\eta}{3}\mathds{1}_{[h,\infty)}(y)\int_s^t b(\sigma) \e^{-\frac{2\eta}{3}\int_\sigma^t b(\tau) \d \tau}[f(\sigma,s,y + h) + f(\sigma,s,y - h)]\d \sigma,
\end{align*}
with $(t,s,y) \in [0,T]^2\times \mathfrak{C}_x$, $T>0$ and $f_0\in \mathcal{B}(\mathfrak{C}_x)$. Considere functions \[f,g\in \left\{\varphi\in\mathcal{B}([0,T]^2\times\mathfrak{C}_x),\, \forall s\in [0,T],\, \varphi(s,s,\cdot) = f_0\right\}.\] We easily show that
\[\sup_{(t,s)\in [0,T]^2}\|\Gamma f(t,s,\cdot) - \Gamma g(t,s,\cdot)\|_{\mathcal{B}(\mathfrak{C}_x)} \leq \frac{2\eta}{3}\|b\|_\infty T\sup_{(t,s)\in [0,T]^2}\|f(t,s,\cdot)-g(t,s,\cdot)\|_{\mathcal{B}(\mathfrak{C}_x)}\] so for $T<\frac{3\|b\|_\infty}{2\eta}$, the operator $\Gamma$ is a contraction and thus has a single fixed point on $[0,T]$. To prove the claim on the boundedness of the solution, we prove that the closed ball of radius $\|f_0\|_{\mathcal{B}(\mathfrak{C}_x)}$ is invariant under $\Gamma$. One has $\Gamma f(t,s,x) = f_0(x)$, so
\[
\sup_{(t,s)\in [0,T]^2}\left|\Gamma f(t,s,x)\right| \leq \|f_0\|_{\mathcal{B}(\mathfrak{C}_x)}.
\]
Now for $y>x$, we compute
\[
\left|\Gamma f(t,s,y)\right| \leq \|f_0\|_{\mathcal{B}(\mathfrak{C}_x)}\e^{-\frac{2\eta}{3}\int_s^t b(s) \d s} + \frac{2\eta}{3}\int_s^t b(\sigma)\e^{-\frac{2\eta}{3}\int_\sigma^t b(\tau) \d \tau}\|f(\sigma,s,\cdot)\|_{\mathcal{B}(\mathfrak{C}_x)}\d\sigma
\]
so \[\sup_{(t,s)\in [0,T]^2}\|f(t,s,\cdot)\|_{\mathcal{B}(\mathfrak{C}_x)} \leq \|f_0\|_{\mathcal{B}(\mathfrak{C}_x)} \qquad \Longrightarrow \qquad \sup_{(t,s)\in [0,T]^2}\|\Gamma f(t,s,\cdot)\|_{\mathcal{B}(\mathfrak{C}_x)} \leq \|f_0\|_{\mathcal{B}(\mathfrak{C}_x)},\] thus the claim is proved. As a consequence, one can iterate the fixed point procedure on time intervals of length $T$ \textit{ad nauseam}, to finally obtain a unique global solution defined on $\R^3_+$.
\\
If $f_0$ is nonnegative, the operator $\Gamma$ preserves the closed cone of Borel functions defined on $\{(t,s,y)\in \R^2_+\times\mathfrak{C}_x, s\leq t\}$, so the fixed point $f_b$ satisfies $f_b(t,s,\cdot)\geq 0$ for all $t\geq s \geq 0$. In addition, if $f_0 = \mathds{1}_{\mathfrak{C}_x}$, we easily obtain by computations that $\mathds{1}_{\mathfrak{C}_x}$ is the fixed point. 
\\
Then, we prove that the time derivative of the aforementioned solution $f$ lies in $\Cr^1([0,T]^2,\mathcal{B}(\mathfrak{C}_x))$ for any $T>0$. Let $f,g\in\left\{\varphi\in\Cr^1([0,T]^2,\mathcal{B}(\mathfrak{C}_x)),\, \forall s\in [0,T],\, \varphi(s,s,\cdot) = f_0\right\}$. This set is $\Gamma-$invariant if $f_0$ lies in $\mathcal{B}(\mathfrak{C}_x)$. For $t,s\geq0$ and $y \geq h$ we compute the derivative in the first variable and obtain after integrating by parts
\begin{align}
\p_t \Gamma f(t,s,y)& = b(t)\A f_0(y)\mathds{1}_{[h,\infty)}(y)\e^{-\frac{2\eta}{3}\int_s^t b(\sigma) \d \sigma} \nonumber\\
& \quad + \frac{\eta}{3}b(t)\mathds{1}_{[h,\infty)}(y)\int_s^t \e^{-\frac{2\eta}{3}\int_s^t b(\sigma) \d \sigma} \left[\p_t f(\sigma,s,y+h) + \p_t f(\sigma,s,y-h) \right]\d \sigma.\label{eq:ptG}
\end{align}
The derivation with respect to the second variable provides
\begin{align*}
\p_s \Gamma f(t,s,y) &= -b(s)\A f_0(y)\mathds{1}_{[h,\infty)}(y)\e^{-\frac{2\eta}{3}\int_s^t b(\sigma) \d \sigma} \\
& \qquad + \frac{\eta}{3}\mathds{1}_{[h,\infty)}(y)\int_s^t b(\sigma) \e^{-\frac{2\eta}{3}\int_s^t b(\sigma) \d \sigma} \left[\p_s f(\sigma,s,y+h) + \p_s f(\sigma,s,y-h) \right]\d \sigma.
\end{align*}
Finally, we obtain
\[
\sup_{(t,s)\in [0,T]^2}\|\Gamma f(t,s,\cdot) - \Gamma g(t,s,\cdot)\|_{\mathcal{C}^1} \leq \frac{2\eta}{3}\|b\|_\infty T\sup_{(t,s)\in [0,T]^2}\|f(t,s,\cdot)-g(t,s,\cdot)\|_{\mathcal{C}^1}
\]
so $\Gamma$ is also a contraction in $\Cr^1\left([0,T]^2;\mathcal{B}(\mathfrak{C}_x)\right)$ provided $T < \frac{3\|b\|_\infty}{2\eta}$. Iterating this result, we obtain that the solution is continuously differentiable in the two first variables on $[0,T]$ for any $T>0$.
\\
In the case the function $b$ is non increasing, we can bound $b(t)$ in~\eqref{eq:ptG} from above by $b(\sigma)$ inside the integral for all $\sigma\in [s,t]$ and obtain, taking the supremum norm
\[
|\p_t \Gamma f(t,s,y)| \leq b(t)\|\A f_0\|_{\mathcal{B}(\mathfrak{C}_x)}\e^{-\frac{2\eta}{3}\int_s^t b(\sigma) \d \sigma} + \frac{2\eta}{3}\int_s^t b(\sigma)\e^{-\frac{2\eta}{3}\int_\sigma^t b(\tau) \d \tau}\|\p_t f(\sigma,s,\cdot)\|_{\mathcal{B}(\mathfrak{C}_x)}\d\sigma
\]
and a similar formula holds for the derivative in the variable $s$. Then we obtain
\begin{align*}
&\left|\Gamma f(t,s,y)\right| + \left|\p_t\Gamma f(t,s,y)\right| + \left|\p_s\Gamma f(t,s,y)\right|\\
 \leq & \left(\|f_0\|_{\mathcal{B}(\mathfrak{C}_x)} + (b(t) + b(s))\|\A f_0\|_{\mathcal{B}(\mathfrak{C}_x)}\right)\e^{-\frac{2\eta}{3}\int_s^t b(s) \d s} + \frac{2\eta}{3}\int_s^t b(\sigma)\e^{-\frac{2\eta}{3}\int_\sigma^t b(\tau) \d \tau}\|f(\sigma,s,\cdot)\|_{\Cr^1}\d\sigma
\end{align*}
so
\begin{align*}
& \sup_{(t,s)\in [0,T]^2}\|f(t,s,\cdot)\|_{\Cr^1} \leq \left(\|f_0\|_{\mathcal{B}(\mathfrak{C}_x)} + 2b(0)\|\A f_0\|_{\mathcal{B}(\mathfrak{C}_x)}\right) \\
& \qquad \Longrightarrow \qquad \sup_{(t,s)\in [0,T]^2}\|\Gamma f(t,s,\cdot)\|_{\Cr^1} \leq \left(\|f_0\|_{\mathcal{B}(\mathfrak{C}_x)} + 2b(0)\|\A f_0\|_{\mathcal{B}(\mathfrak{C}_x)}\right)
\end{align*}
and we conclude as before.
\end{proof}
We now express the solution constructed in the previous proposition as a semigroup acting on an initial distribution. To this extent, we define $M_{s,t}^{(b)}f_0(y) = f_b(s,t,y)$ for all $(s,t,y)\in [0,T]^2 \times \R_+$ and $T>0$, where $f_b$ is the unique fixed point of $\Gamma$ with initial condition $f_0$ and associated to function $b$.
\begin{coro}\label{coro:semigroup}
For any continuous function $b : \R_+ \mapsto \R$ bounded in supremum norm and $f\in\mathcal{B}(\R_+)$, the function $(s,t,x)\mapsto M_{s,t}^{(b)} f(x)$ lies in $\Cr\left(\R_+;\mathcal{B}(\R_+)\right)\cap\Cr^1\left([0,T]^2;\mathcal{B}(\R_+)\right)$ for all $T>0$. The family $\left(M_{s,t}^{(b)}\right)_{0\leq s\leq t}$ is a nonhomogeneous semigroup, \textit{i.e.} satisfies
\[
\forall t\geq \tau\geq s\geq0,\qquad M_{s,s}^{(b)} f = f, \quad \text{and} \quad M_{s,t}^{(b)} f =  M_{\tau,t}^{(b)}M_{s,\tau}^{(b)} f.
\]
In addition, it satisfies
\[
\forall t\geq s\geq 0, \qquad \p_t M_{s,t}f = b(t)\A M_{s,t}^{(b)}f = b(t)M_{s,t}^{(b)}\A f \quad \text{and} \quad \p_s M_{s,t}^{(b)}f = - b(s)\A M_{s,t}^{(b)}f.
\]
It is a positive and conservative contraction on ${\mathfrak{C}_x}$ for all $x\in[0,h)$, i.e for all $t\geq s\geq 0$,
\begin{itemize}
\item[] \qquad $f\geq0 \Rightarrow M_{s,t}^{(b)} f \geq0$,
\item[] \qquad $M_{s,t}^{(b)} \mathds{1}_{\mathfrak{C}_x} = \mathds{1}_{\mathfrak{C}_x}$,
\item[] \qquad $\left\|M_{s,t}^{(b)}f\right\|_{\mathcal{B}(\mathfrak{C}_x)} \leq \|f\|_{\mathcal{B}(\mathfrak{C}_x)}$.
\end{itemize}
\end{coro}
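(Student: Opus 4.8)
The plan is to read off every assertion from Proposition~\ref{prop:adj} applied on each wealth class $\mathfrak{C}_x=\{x+kh:k\in\N\}$ separately, the only genuinely new content being the semigroup law and the commutation $\A M=M\A$, both of which I would obtain from a uniqueness argument. The single structural fact that makes this work is that \emph{on each class $\A$ is a bounded operator}: since $\A f(x)=0$ and $|\A f(x+kh)|\le 4\|f\|_{\mathcal{B}(\mathfrak{C}_x)}$ for $k\ge1$, one has $\|\A f\|_{\mathcal{B}(\mathfrak{C}_x)}\le 4\|f\|_{\mathcal{B}(\mathfrak{C}_x)}$. Identifying a function on $\mathfrak{C}_x$ with the sequence of its values, $\A$ is the discrete Laplacian on the half-lattice $\N$ with a homogeneous condition at the endpoint, a bounded operator on the sup-normed space $\mathcal{B}(\mathfrak{C}_x)$, and the flow defining $M_{s,t}^{(b)}$ is the propagator of the linear non-autonomous equation $\p_t u=\tfrac{\eta}{3}b(t)\A u$ whose generator is, at every time, a scalar multiple of this one fixed bounded operator.

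First I would dispatch the properties that are a mere restatement of Proposition~\ref{prop:adj} through the definition $M_{s,t}^{(b)}f_0=f_b(s,t,\cdot)$: positivity ($f\ge0\Rightarrow M_{s,t}^{(b)}f\ge0$), conservativity ($M_{s,t}^{(b)}\mathds{1}_{\mathfrak{C}_x}=\mathds{1}_{\mathfrak{C}_x}$) and the contraction bound $\|M_{s,t}^{(b)}f\|_{\mathcal{B}(\mathfrak{C}_x)}\le\|f\|_{\mathcal{B}(\mathfrak{C}_x)}$ are precisely the three bullets of the proposition. For the regularity $M_{s,t}^{(b)}f\in\Cr(\R_+^2;\mathcal{B}(\R_+))\cap\Cr^1([0,T]^2;\mathcal{B}(\R_+))$, I would use the decomposition $f=\sum_{x\in[0,h)}f_{|\mathfrak{C}_x}$: because $\A$ only shifts by $\pm h$, the flow never couples distinct classes, so $M_{s,t}^{(b)}f$ on the class of a point $y$ depends only on $f_{|\mathfrak{C}_x}$ and $\|M_{s,t}^{(b)}f\|_{\mathcal{B}(\R_+)}=\sup_{x\in[0,h)}\|M_{s,t}^{(b)}(f_{|\mathfrak{C}_x})\|_{\mathcal{B}(\mathfrak{C}_x)}$. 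The per-class continuity and $\Cr^1$ estimates of the proposition are controlled by $\|f_0\|_{\mathcal{B}(\mathfrak{C}_x)}\le\|f_0\|_{\mathcal{B}(\R_+)}$ and $b(0)$, hence uniformly in $x$, so gluing the equicontinuous, uniformly Lipschitz-in-$(s,t)$ per-class solutions yields the global regularity.

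Next I would establish the differential identities. Differentiating the Duhamel formula~\eqref{eq:mild_const} directly in $t$, the boundary term produced by the upper limit of the integral and the term from differentiating the exponential weight combine, the two contributions proportional to $b(t)$ times the current value reconstructing the $-2$ of the discrete Laplacian; this gives $\p_t M_{s,t}^{(b)}f=\tfrac{\eta}{3}b(t)\A M_{s,t}^{(b)}f$ (the constant $\tfrac{\eta}{3}$ being the one carried by~\eqref{eq:mild_const}), and differentiation in $s$ gives $\p_s M_{s,t}^{(b)}f=-\tfrac{\eta}{3}b(s)\A M_{s,t}^{(b)}f$, while $t=s$ in~\eqref{eq:mild_const} gives $M_{s,s}^{(b)}f=f$. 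For the commutation $\A M_{s,t}^{(b)}f=M_{s,t}^{(b)}\A f$, I would exploit that $\A$ is bounded and time-independent, so it passes through $\p_t$ and through the equation: $\p_t(\A M_{s,t}^{(b)}f)=\tfrac{\eta}{3}b(t)\A(\A M_{s,t}^{(b)}f)$, whereas $M_{s,t}^{(b)}\A f$ solves the same linear equation with the same value $\A f$ at $t=s$; uniqueness for this Cauchy problem (a Gr\"onwall estimate, legitimate precisely because the generator is bounded) forces equality.

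The semigroup law is the same uniqueness argument: for fixed $s\le\tau$, the two curves $t\mapsto M_{\tau,t}^{(b)}\big(M_{s,\tau}^{(b)}f\big)$ and $t\mapsto M_{s,t}^{(b)}f$ both solve $\p_t u=\tfrac{\eta}{3}b(t)\A u$ for $t\ge\tau$ and agree at $t=\tau$, hence coincide; all the identities then descend from each class to $\mathcal{B}(\R_+)$ by linearity and the decomposition above. Conceptually, everything reflects the explicit representation $M_{s,t}^{(b)}=\exp\!\big(\tfrac{\eta}{3}\big(\int_s^t b\big)\A\big)$ on each class, a bona fide operator exponential since $\A$ is bounded: additivity of the integral is the cocycle law, and $\A$ commutes with its own exponential. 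I expect the main difficulty to be organisational rather than analytic: one must keep the two-time indexing consistent with the convention $M_{s,t}^{(b)}f_0=f_b(s,t,\cdot)$, so that the propagator indeed runs from $s$ to $t$, and one must check that the bounds of Proposition~\ref{prop:adj} are uniform in $x\in[0,h)$ before gluing, so that continuity and differentiability survive the passage from $\mathfrak{C}_x$ to $\R_+$. The boundedness of $\A$ on $\mathcal{B}(\mathfrak{C}_x)$ is the only analytic input beyond the proposition, and it is what legitimises both the uniqueness arguments and the exponential representation.
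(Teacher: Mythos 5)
Your proposal is correct, and all the ``easy'' items (positivity, conservativity, contraction, per-class decomposition and the uniformity in $x$ needed to glue) match the paper, but you reach the two substantive points --- the commutation $\A M_{s,t}^{(b)}=M_{s,t}^{(b)}\A$ and the semigroup law --- by a genuinely different route. The paper proves the commutation by hand: it verifies on initial data $\mathds{1}_{\{x+kh\}}$ that $y\mapsto M_{s,t}^{(b)}\mathds{1}_{\{x+kh\}}(y+h)+M_{s,t}^{(b)}\mathds{1}_{\{x+kh\}}(y-h)$ is the fixed point of $\Gamma$ with datum $\mathds{1}_{\{x+(k-1)h\}}+\mathds{1}_{\{x+(k+1)h\}}$, then extends ``by linearity and continuity'' to general $f$, and only then gets the semigroup law by showing $\tau\mapsto M_{s,\tau}^{(b)}M_{\tau,t}^{(b)}f$ has vanishing derivative. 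You instead observe that $\A$ is a bounded, time-independent operator on $\mathcal{B}(\mathfrak{C}_x)$ that passes through $\p_t$ (being a finite combination of point evaluations), so $\A M_{s,t}^{(b)}f$ and $M_{s,t}^{(b)}\A f$ solve the same linear Cauchy problem with the same data at $t=s$ and coincide by Gr\"onwall; the cocycle law follows from the same forward uniqueness without even invoking the commutation. Your route buys two things: it bypasses the extension from indicator data to arbitrary bounded $f$, which in the paper requires an infinite pointwise sum that does not converge in sup norm and is passed over rather quickly, and it makes the representation $M_{s,t}^{(b)}=\exp\bigl(\tfrac{\eta}{3}\bigl(\int_s^t b\bigr)\A\bigr)$ explicit. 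The paper's route stays entirely inside the fixed-point framework of Proposition~\ref{prop:adj} and never needs ODE uniqueness in the Banach space. Two cosmetic caveats: your $\Cr^1$ bound should be phrased with $\|b\|_\infty$ rather than $b(0)$, since the corollary does not assume $b$ non increasing (monotonicity is only used in the proposition for the uniform-in-time $\Cr^1_x$ estimate); and you carry the factor $\tfrac{\eta}{3}$ in $\p_t M_{s,t}^{(b)}f=\tfrac{\eta}{3}b(t)\A M_{s,t}^{(b)}f$ where the corollary's display omits it --- your bookkeeping is the consistent one given~\eqref{eq:mild_const}, so this is a discrepancy in the paper's statement, not in your argument.
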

\begin{proof}
To prove the semigroup property $M_{s,t}^{(b)} = M_{s,\tau}^{(b)} M_{\tau,t}^{(b)}$, we first prove the commutation properties. They are trivially true on $[0,h)$, and thus we focus on the interval $[h,\infty)$. For $f\in\B(\R_+)$, we compute the derivative in $t$ of $(s,t,y) \mapsto M_{s,t}^{(b)}f(y)$
\[
\p_t M_{s,t}^{(b)}f(y) = -\frac{2\eta}{3}b(t)M_{s,t}^{(b)}f(y) + \frac{\eta}{3}b(t)\left[M_{s,t}^{(b)}f(y+h) + M_{s,t}^{(b)}f(y-h)\right] = b(t)\A M_{s,t}^{(b)}f(y).
\]
The derivative in $s$ of $M_{s,t}f$ satisfies Equation~\eqref{eq:mild_const} on $[s,T)$ for all $s\in[0,T)$ with initial condition $-b(s)\A f$, and thus
\[
\p_s M_{s,t}f = -b(s)\A M_{s,t}f.
\]
Now we prove the desired commutation for an initial condition of the form $\mathds{1}_{\{x + kh\}}$ with $x\in[0,h)$ and $k\in\N$, and the result for a general initial distribution is deduced from the linearity and continuity of these operators. For $k\geq 2$, one has
\begin{align*}
\A\mathds{1}_{\{x + kh\}}(y) &= \mathds{1}_{\{x + kh\}}(y+h) + \mathds{1}_{\{x + kh\}}(y-h) - 2\mathds{1}_{\{x + kh\}}(y) \\
&= \mathds{1}_{\{x + (k-1)h\}}(y) + \mathds{1}_{\{x + (k+1)h\}}(y) - 2\mathds{1}_{\{x + kh\}}(y),
\end{align*}
so we compute
\begin{align*}
&M_{s,t}^{(b)}\A\mathds{1}_{\{x + kh\}}(y) = M_{s,t}^{(b)}\mathds{1}_{\{x + (k-1)h\}}(y) + M_{s,t}^{(b)}\mathds{1}_{\{x + (k+1)h\}}(y) - 2M_{s,t}^{(b)}\mathds{1}_{\{x + kh\}}(y)\\
&\A M_{s,t}^{(b)}\mathds{1}_{\{x + kh\}}(y) = M_{s,t}^{(b)}\mathds{1}_{\{x + kh\}}(y+h)+M_{s,t}^{(b)}\mathds{1}_{\{x + kh\}}(y-h) - 2M_{s,t}^{(b)}\mathds{1}_{\{x + kh\}}(y)
\end{align*}
and so it is enough to show that
\[
M_{s,t}^{(b)}\mathds{1}_{\{x + (k-1)h\}}(y) + M_{s,t}^{(b)}\mathds{1}_{\{x + (k+1)h\}}(y) = M_{s,t}^{(b)}\mathds{1}_{\{x + kh\}}(y+h)+M_{s,t}^{(b)}\mathds{1}_{\{x + kh\}}(y-h).
\]
To do so, we define $u(s,t,y) = M_{s,t}^{(b)}\mathds{1}_{\{x + kh\}}(y+h)+M_{s,t}^{(b)}\mathds{1}_{\{x + kh\}}(y-h)$, and this function satisfies
\begin{align*}
u(s,t,y) &= \left(\mathds{1}_{\{x + kh\}}(y+h) + \mathds{1}_{\{x + kh\}}(y-h)\right)\e^{-\frac{2\eta}{3}\int_s^t b(\sigma)\d \sigma} \\
& \qquad +\frac{\eta}{3}\int_s^t b(\sigma)\e^{-\frac{2\eta}{3}\int_\sigma^t b(\tau)\d \tau}\left[M_{s,\sigma}\mathds{1}_{\{x + kh\}}(y+2h) + 2\mathds{1}_{\{x + kh\}}(y) + \mathds{1}_{\{x + kh\}}(y-2h)\right]\d \sigma \\
& = \left(\mathds{1}_{\{x + (k-1)h\}} + \mathds{1}_{\{x + (k+1)h\}}\right)(y)\e^{-\frac{2\eta}{3}\int_s^t b(\sigma)\d \sigma}\\
&\qquad + \frac{\eta}{3}\int_s^t b(\sigma)\e^{-\frac{2\eta}{3}\int_\sigma^t b(\tau)\d \tau}[u(s,\sigma,y+h) + u(s,\sigma,y-h)]\d \sigma
\end{align*}
so $u$ is the fixed point of $\Gamma$ with initial condition $\mathds{1}_{\{x + (k-1)h\}} + \mathds{1}_{\{x + (k+1)h\}}$, so the claimed is proved for $k\geq2$. The case $k=1$ can be proved in a similar fashion, and the case $k = 0$ is trivial.
Now to prove the semigroup property, we define $g(\tau) = M_{s,\tau}^{(b)} M_{\tau,t}^{(b)} f$ for $\tau\in[s,t]$ and prove that it is actually constant. Using the properties already proved, we can compute its derivative and obtain
\[g'(\tau) = b(\tau)\left(\A M_{s,\tau} - M_{s,\tau}\A\right)M_{\tau,t}f = 0.\]
The positivity, contraction and conservation properties follow immediately from Proposition~\ref{prop:adj}. 
\end{proof}

\section{Wellposedness\label{sec:measure}}

In this section, we take advantage of the duality approach from section~\ref{sec:dual} to prove the wellposedness of Equation~\eqref{eq:meas_const}. We provide the measure solution by constructing a left action semigroup on $\M(\Omega)$ for $\Omega \subset \R_+$, and thus heavily rely on the results of Section~\ref{sec:dual}. For $0\leq s\leq t$, a positive measure $\mu$ on $\Omega$ and any Borel set $A\subset \Omega$, we define
\[
\mu M_{s,t}^{(b)}(A) := \int_\Omega M_{s,t}^{(b)}\1_A \d \mu 
\] 
and will show that this construction provides the unique measure solution of Equation~\eqref{eq:pde} in the sense of Definition~\ref{def:sol}. First, we show that this defines a positive measure.
\begin{lem}
Let $b$ be a continuous function, bounded in supremum norm. 
For all positive measure $\mu$ on $\R_+$ and $t\geq s\geq 0$, the set function $\mu M_{s,t}^{(b)}$ lies in $\M(\R_+)$. Additionnaly, for any $f\in\mathcal{B}(\Omega)$, one has 
\begin{equation}\label{identity}
\langle\mu M_{s,t}^{(b)},f\rangle = \langle\mu,M_{s,t}^{(b)}f\rangle.
\end{equation}
\end{lem}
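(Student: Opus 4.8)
The plan is to realize $A\mapsto\mu M_{s,t}^{(b)}(A)$ as the integral against $\mu$ of the transition kernel $k(y,A):=M_{s,t}^{(b)}\1_A(y)$, to show that for each fixed $y$ this kernel is a probability measure, and only then to transfer $\sigma$-additivity and the duality bracket to $\mu M_{s,t}^{(b)}$. Fixing $0\le s\le t$, I would first reduce to a single class: since $M_{s,t}^{(b)}$ is built only from the shifts $y\mapsto y\pm h$ it leaves each $\mathfrak{C}_x$ invariant, and this is made rigorous by the properties of Corollary~\ref{coro:semigroup}. Writing $x=x(y)$ for the representative of $y$ in $[0,h)$, monotonicity (from positivity) together with $M_{s,t}^{(b)}\1_{\mathfrak{C}_{x'}}=\1_{\mathfrak{C}_{x'}}$ yields $0\le M_{s,t}^{(b)}\1_{A\cap\mathfrak{C}_{x'}}(y)\le\1_{\mathfrak{C}_{x'}}(y)=0$ for every $x'\neq x$, so that $k(y,A)=M_{s,t}^{(b)}\1_{A\cap\mathfrak{C}_x}(y)$ depends only on the countable set $A\cap\mathfrak{C}_x$. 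Positivity gives $k(y,\cdot)\ge0$, while $k(y,\emptyset)=0$ and $k(y,\R_+)=M_{s,t}^{(b)}\1_{\mathfrak{C}_x}(y)=1$ are immediate; the only nontrivial point left is countable additivity.

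That countable additivity --- equivalently, the fact that $M_{s,t}^{(b)}$ commutes with the decomposition $\1_{A\cap\mathfrak{C}_x}=\sum_{k:\,x+kh\in A}\1_{\{x+kh\}}$ and loses no mass at infinity --- is the one step I expect to require genuine care. Rather than expanding the Duhamel series, I would argue by monotone convergence inside the mild formulation. Put $g_N:=\sum_{k\le N,\;x+kh\in A}\1_{\{x+kh\}}$, so that $g_N\uparrow\1_{A\cap\mathfrak{C}_x}$, and set $w_N:=M_{s,t}^{(b)}g_N$. Positivity makes the family $(w_N)_N$ nondecreasing and bounded above by $\1_{\mathfrak{C}_x}$, hence pointwise convergent to some $w_\infty$ with $0\le w_\infty\le1$. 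Every term of the identity~\eqref{eq:mild_const} satisfied by $w_N$ is nonnegative and nondecreasing in $N$, so the monotone convergence theorem lets me pass to the limit and conclude that $w_\infty$ again solves~\eqref{eq:mild_const}, now with source $\1_{A\cap\mathfrak{C}_x}$. Since this is the fixed-point equation $\Gamma w=w$, whose solution is unique in $\mathcal{B}([0,T]^2\times\mathfrak{C}_x)$ by the contraction estimate of Proposition~\ref{prop:adj}, I obtain $w_\infty=M_{s,t}^{(b)}\1_{A\cap\mathfrak{C}_x}$, i.e. $\sum_k M_{s,t}^{(b)}\1_{\{x+kh\}\cap A}(y)=k(y,A)$. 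This is exactly the $\sigma$-additivity of $k(y,\cdot)$, and the comparison $w_\infty\le\1_{\mathfrak{C}_x}$ keeps its total mass equal to $1$.

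Granting this, the conclusion is routine. Each $y\mapsto k(y,A)$ is bounded and Borel (it lies in $\mathcal{B}(\R_+)$), so $\mu M_{s,t}^{(b)}=\int_{\R_+}k(y,\cdot)\,\d\mu(y)$ is well defined; for pairwise disjoint sets $(A_n)$ Tonelli's theorem lets me exchange the nonnegative sum $\sum_n k(y,A_n)$ with the integral, giving $\sigma$-additivity, while $\mu M_{s,t}^{(b)}(\R_+)=\int_{\R_+}k(y,\R_+)\,\d\mu=\mu(\R_+)<\infty$ shows the total mass is finite. Hence $\mu M_{s,t}^{(b)}$ is a finite positive, thus Radon, measure in $\M(\R_+)$. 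For the duality identity~\eqref{identity} I would bootstrap from indicators: $\langle\mu M_{s,t}^{(b)},\1_A\rangle=\langle\mu,M_{s,t}^{(b)}\1_A\rangle$ holds by the very definition of $\mu M_{s,t}^{(b)}$, extends to simple functions by linearity of $M_{s,t}^{(b)}$ and of the integral, and then to every $f\in\mathcal{B}(\R_+)$ by approximating a bounded Borel function uniformly by simple functions, using the contraction bound $\|M_{s,t}^{(b)}(f-f_n)\|_\infty\le\|f-f_n\|_\infty$ and dominated convergence for the two finite integrals.
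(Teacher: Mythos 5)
Your proposal is correct and follows essentially the same route as the paper: both hinge on passing monotone pointwise limits through the mild fixed-point equation~\eqref{eq:mild_const} (with uniqueness of the fixed point identifying the limit) to obtain $\sigma$-additivity, and both then bootstrap the duality identity from indicators to simple functions to general bounded Borel functions. Your write-up merely makes explicit the class-by-class kernel reduction and the monotone-convergence step that the paper compresses into ``we can easily show,'' and substitutes uniform approximation plus the contraction bound for the paper's monotone-limit-plus-Hahn--Jordan extension in the final step; these are cosmetic differences.
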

A detailed proof for a similar result is given in~\cite{Gabriel2022} for weigthed measures and can be adapted to the present case. Yet we provide the main elements of the proof.
\begin{proof}
From~\eqref{eq:mild_const}, the properties of the semigroup $M_{s,t}^{(b)}$ and the monotone convergence theorem, we can easily show that for all increasing sequences $(f_n)_{n\in\N}\subset \mathcal{B}(\R_+)$ that converge pointwise to $f\in\mathcal{B}(\R_+)$, one has for all $x\geq 0$ and $t\geq 0$
\[
\lim_{n\to\infty}M_{s,t}f_n(x) = M_{s,t}f(x).
\]
This ensures that if $(A_n)_{n\in \N}$ is a countable sequence of disjoint Borel sets in $\Omega$, one has
\[\mu M_{s,t}\left(\bigsqcup_{k=0}^\infty A_k\right)=\sum_{k=0}^\infty \mu M_{s,t}(A_k)\] for all $\mu \in \M_+(\Omega)$. The remaining two axioms for a positive measure come from the positivity of the semigroup and the uniqueness of the solution of Equation~\eqref{eq:mild_const}. By definition of $\mu M_{s,t}^{(b)}$, the identity $(\mu M_{s,t}^{(b)})f = \mu(M_{s,t}^{(b)}f)$ is clearly true for any simple function $f$. Since any nonnegative measurable function is the increasing pointwise limit of simple functions, it is also valid in $[0,\infty]$ for any nonnegative $f\in \mathcal{B}(\Omega)$. Decomposing $f\in \mathcal{B}(\Omega)$ as $f = f_+ - f_-$, the linearity of the semigroup ensures that the equality $(\mu M_{s,t}^{(b)})f = \mu(M_{s,t}^{(b)}f)$ is true for all $f\in\mathcal{B}(\Omega)$.
\end{proof}
For $t\geq s\geq 0$ we extend the definition of $\mu \mapsto \left(\mu M_{s,t}^{(b)}\right)_{t\geq s\geq0}$ to $\M(\R_+)$ by setting
\[
\mu M_{s,t}^{(b)} := \mu_+ M_{s,t}^{(b)} - \mu_- M_{s,t}^{(b)},
\]
and this extension clearly preserves the identity $\langle\mu M_{s,t}^{(b)},f\rangle = \langle\mu,M_{s,t}^{(b)}f\rangle$.

\begin{prop}
For any continuous function $b : \R_+ \mapsto \R$ bounded in supremum norm and $f\in\mathcal{B}(\R_+)$, there exists at least a family of Radon measures $(\mu_t^{(b)})_{t\geq 0}$ with initial condition $\mu$ satisfying
\[
\mu_t^{(b)} f = \mu f + \int_0^t \langle\mu_s^{(b)}, \left(b(s)\A f\right)\rangle\d s.
\]
\end{prop}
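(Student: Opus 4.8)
The plan is to exhibit the explicit candidate $\mu_t^{(b)} := \mu M_{0,t}^{(b)}$ built from the left action of the dual semigroup, and to verify the mild identity by transporting everything onto the function side through the duality relation~\eqref{identity}. That $\mu_t^{(b)}$ is a well-defined element of $\M(\R_+)$ for each $t\geq 0$ is already granted: for $\mu$ positive this is the content of the preceding Lemma, and the signed case follows from the Hahn--Jordan extension $\mu M_{0,t}^{(b)} = \mu_+ M_{0,t}^{(b)} - \mu_- M_{0,t}^{(b)}$ introduced just above. Moreover $\mu_0^{(b)} = \mu M_{0,0}^{(b)} = \mu$, so the initial condition is correct. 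The identity~\eqref{identity} then gives, for any $f\in\mathcal{B}(\R_+)$,
\[
\langle \mu_t^{(b)}, f\rangle = \langle \mu M_{0,t}^{(b)}, f\rangle = \langle \mu, M_{0,t}^{(b)} f\rangle,
\]
so it suffices to integrate the evolution of $t\mapsto M_{0,t}^{(b)} f$ in time and pair the result against $\mu$.

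The second step uses the regularity and the differential identities of Corollary~\ref{coro:semigroup}. Since $(s,t,y)\mapsto M_{s,t}^{(b)} f(y)$ lies in $\Cr^1([0,T]^2;\mathcal{B}(\R_+))$ and $\p_s M_{0,s}^{(b)} f = b(s)\,\A M_{0,s}^{(b)} f = b(s)\, M_{0,s}^{(b)} \A f$, integrating in $s$ from $0$ to $t$ and using $M_{0,0}^{(b)} f = f$ yields the function-level identity
\[
M_{0,t}^{(b)} f = f + \int_0^t b(s)\, M_{0,s}^{(b)} \A f\, \d s,
\]
valid pointwise in $y$ (at fixed $y$ the map $s\mapsto M_{0,s}^{(b)} f(y)$ is $\Cr^1$). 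Applying the bracket $\langle \mu,\cdot\rangle$, interchanging the time integral with the integration against $\mu$, and using a second time~\eqref{identity} in the form $\langle \mu, M_{0,s}^{(b)} \A f\rangle = \langle \mu M_{0,s}^{(b)}, \A f\rangle = \langle \mu_s^{(b)}, \A f\rangle$, one lands exactly on
\[
\langle \mu_t^{(b)}, f\rangle = \langle \mu, f\rangle + \int_0^t \langle \mu_s^{(b)}, b(s)\A f\rangle\, \d s.
\]

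The only point requiring care is the Fubini interchange of $\int_0^t \d s$ with $\int \d\mu$. Here the uniform contraction bound of Corollary~\ref{coro:semigroup}, taken over all classes $\mathfrak{C}_x$ so as to read $\|M_{0,s}^{(b)} \A f\|_\infty \leq \|\A f\|_\infty \leq 4\|f\|_\infty$, together with $\|b\|_\infty<\infty$ and the finiteness $\|\mu\|<\infty$, shows that $(s,y)\mapsto b(s)\, M_{0,s}^{(b)}\A f(y)$ is bounded on $[0,t]\times\R_+$; continuity in $s$ from Corollary~\ref{coro:semigroup} secures joint measurability. Hence $b(s)\, M_{0,s}^{(b)}\A f$ is integrable for the product measure $\d s\otimes \d|\mu|$ on $[0,t]\times\R_+$ and Fubini's theorem applies. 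This integrability bookkeeping is the main (and essentially the only) technical obstacle; everything else is a direct transcription through the duality identity~\eqref{identity}.
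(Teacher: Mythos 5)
Your proposal is correct and follows essentially the same route as the paper: define $\mu_t^{(b)} := \mu M_{0,t}^{(b)}$, write the function-level mild identity $M_{0,t}^{(b)}f = f + \int_0^t b(\sigma)M_{0,\sigma}^{(b)}\A f\,\d\sigma$ from the differential relations of Corollary~\ref{coro:semigroup}, pair against $\mu$, interchange the integrals, and apply the duality identity~\eqref{identity}. The only difference is that you spell out the Fubini/integrability bookkeeping that the paper passes over with the phrase ``interverting integrals,'' which is a welcome but not substantive addition.
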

\begin{proof}
For $f\in\mathcal{B}(\R_+)$ and $t\geq 0$, we write
\[
M_{0,t}^{(b)}f = f + \int_0^t \p_\sigma M_{0,s}^{(b)} f \d s = f + \int_0^t b(\sigma)M_{0,s}^{(b)}\A f \d \sigma.
\]
Integrating against $\mu$, interverting integrals, using~\eqref{identity} and taking the obtained identity at $s=0$, we obtain
\[
\langle\mu M_{0,t}^{(b)},f\rangle = \langle \mu,f\rangle + \int_0^t \langle \mu M_{0,s}^{(b)},b(s)\A f\rangle \d s.
\]
so the family $t\mapsto \mu M_{0,t}^{(b)}$ satisfies the desired relation.
\end{proof}

\begin{theorem}
There exists a unique solution $(\mu_t)_{t\geq0}$ to Equation~\eqref{eq:pde} in the sense of Definition~\ref{def:sol}. Additionally, denoting $\muin$ the initial measure, one has
\begin{itemize}
\item[] $\|(\mu)\|_{[0,T]} \leq \|\muin\|$,
\item[] $\muin \geq 0 \qquad \Rightarrow \qquad \forall t\geq 0,\, \mu_t \geq 0$.
\end{itemize}
\end{theorem}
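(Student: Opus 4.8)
The plan is to exploit that the only nonlinearity in~\eqref{eq:meas_const} enters through the scalar quantity $\mu_t([h,\infty))$. If the continuous function $b(s):=\tfrac{\eta}{3}\mu_s([h,\infty))$ were known a priori, Equation~\eqref{eq:meas_const} would reduce to the linear nonautonomous problem constructed in Section~\ref{sec:dual}, whose solution is the left action $\mu_t=\muin M_{0,t}^{(b)}$; the preceding proposition gives precisely $\langle\muin M_{0,t}^{(b)},f\rangle=\langle\muin,f\rangle+\int_0^t\langle\muin M_{0,s}^{(b)},b(s)\A f\rangle\,\d s$. Comparing with Definition~\ref{def:sol}, I would recast the whole problem as the scalar fixed-point equation
\[
b(t)=\tfrac{\eta}{3}\langle\muin,M_{0,t}^{(b)}\1_{[h,\infty)}\rangle,\qquad t\in[0,T],
\]
on $\mathcal C([0,T];\R)$: its fixed points are in bijection with measure solutions through $\mu_t=\muin M_{0,t}^{(b)}$, since a fixed point forces the frozen coefficient to coincide with $\tfrac{\eta}{3}\mu_t([h,\infty))$, and conversely any solution yields such a fixed point.

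Before running the fixed point I would prove a linear uniqueness lemma: for fixed continuous bounded $b$, the only family obeying the linear weak equation with datum $\muin$ is $\muin M_{0,t}^{(b)}$. The device is the duality identity that $s\mapsto\langle\nu_s,M_{s,t}^{(b)}f\rangle$ is constant on $[0,t]$ for each fixed $t$ and $f\in\mathcal B(\R_+)$: the contribution of the weak equation satisfied by $\nu_s$ cancels exactly that of $\partial_s M_{s,t}^{(b)}f=-b(s)\A M_{s,t}^{(b)}f$ from Corollary~\ref{coro:semigroup}. Evaluating at $s=0$ and $s=t$ gives $\langle\nu_t,f\rangle=\langle\muin,M_{0,t}^{(b)}f\rangle$ for all $f$, hence $\nu_t=\muin M_{0,t}^{(b)}$. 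The only delicate point is differentiating $s\mapsto\langle\nu_s,M_{s,t}^{(b)}f\rangle$, since $\nu$ is known only through its integral equation while the test function itself moves with $s$; this is settled by a standard time-discretisation, the required $\mathcal C^1$ regularity of $M_{s,t}^{(b)}f$ being furnished by Corollary~\ref{coro:semigroup}. This lemma simultaneously identifies solutions with the semigroup and reduces uniqueness for~\eqref{eq:meas_const} to uniqueness of the scalar fixed point.

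For existence of the fixed point I would set $\Phi(b)(t)=\tfrac{\eta}{3}\langle\muin,M_{0,t}^{(b)}\1_{[h,\infty)}\rangle$ and first note it maps $\mathcal C([0,T];\R)$ into the ball of radius $\tfrac{\eta}{3}\|\muin\|$, using the sup-norm contraction $\|M_{0,t}^{(b)}\1_{[h,\infty)}\|_\infty\le1$. The crux is a Lipschitz estimate in the coefficient,
\[
\|M_{0,t}^{(b_1)}f-M_{0,t}^{(b_2)}f\|_\infty\le C(\|\muin\|,\eta,T)\,\|b_1-b_2\|_\infty\,\|f\|_\infty,
\]
with $C\to0$ as $T\to0$, which I would obtain by subtracting the two mild equations~\eqref{eq:mild_const}, controlling the difference of the exponential weights (Lipschitz in $\int(b_1-b_2)$) and of the integrands, and closing with a Grönwall argument class by class on $\mathfrak{C}_x$. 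This makes $\Phi$ a contraction for $T$ small, giving a unique local fixed point; since every fixed point satisfies the uniform bound $\|b\|_\infty\le\tfrac{\eta}{3}\|\muin\|$, the local time depends only on $\|\muin\|$ and $\eta$, and iterating yields a global solution on $[0,\infty)$. This coefficient-Lipschitz/Grönwall estimate is the main obstacle, since all quantitative control of the nonlinearity passes through it.

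Finally the two stated properties descend from the semigroup. The total variation bound follows from duality and sup-norm contraction, $\|\mu_t\|=\sup_{\|f\|_\infty\le1}\langle\muin,M_{0,t}^{(b)}f\rangle\le\|\muin\|$, whence $\|\mu\|_{[0,T]}=\sup_{t\in[0,T]}\|\mu_t\|\le\|\muin\|$. For positivity, when $\muin\ge0$ the fixed-point coefficient is non-negative: iterating $\Phi$ from $b\equiv0$ (for which $M_{0,t}^{(0)}=\mathrm{Id}$ and $\mu_t=\muin\ge0$) keeps every iterate non-negative by positivity of $M_{0,t}^{(b)}$ for $b\ge0$ (Corollary~\ref{coro:semigroup}), so the limiting $b$ is non-negative and $\mu_t=\muin M_{0,t}^{(b)}\ge0$ for all $t\ge0$.
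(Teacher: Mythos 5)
Your proposal is correct and rests on the same machinery as the paper's proof --- the nonhomogeneous dual semigroups $M_{s,t}^{(b)}$, their left action on measures, a coefficient-Lipschitz estimate obtained by subtracting two mild equations and closing with Gr\"onwall, and a Banach fixed point iterated globally thanks to the a priori bound by $\|\muin\|$ --- but it organizes the fixed point differently. The paper contracts the map $\Pi:(\mu)\mapsto(\muin M_{0,t}^{(\mu_\bullet([h,\infty)))})_{t\geq0}$ on $\Cr([0,T];\M(\R_+))$ and then proves uniqueness separately by a direct Gr\"onwall comparison of two measure solutions of the nonlinear weak formulation; you instead contract on the scalar unknown $b\in\Cr([0,T];\R)$ and close uniqueness through a linear uniqueness lemma (constancy of $s\mapsto\langle\nu_s,M_{s,t}^{(b)}f\rangle$) combined with uniqueness of the scalar fixed point. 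Your scalar formulation is arguably cleaner: it isolates the nonlinearity in a one-dimensional quantity, makes the uniform bound $\|b\|_\infty\leq\frac{\eta}{3}\|\muin\|$ and hence the globalization transparent, and the two contraction estimates are interchangeable since $|b_1(s)-b_2(s)|=\frac{\eta}{3}\left|(\mu^1_s-\mu^2_s)([h,\infty))\right|\leq\frac{\eta}{3}\|\mu^1_s-\mu^2_s\|$. The price is the extra lemma identifying every weak solution of the frozen linear problem with $\muin M_{0,\cdot}^{(b)}$, whose proof (the time-discretised differentiation of $s\mapsto\langle\nu_s,M_{s,t}^{(b)}f\rangle$, using the $\Cr^1$ regularity from Corollary~\ref{coro:semigroup}) you correctly flag as the delicate step; the paper's direct Gr\"onwall argument avoids this lemma, at the cost of implicitly using a local bound on $\|\nu_s\|$ for the competitor solution. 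Both routes deliver the stated total-variation bound and the positivity claim in the same way, from the sup-norm contraction and the positivity of $M_{0,t}^{(b)}$ for nonnegative $b$.
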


\begin{proof}
We work in the space $\Cr([0,T];\M(\R_+))$, which is a Banach space when endowed with the norm
\[
\|(\mu)\|_{[0,T]} := \sup_{t\in[0,T]} \|\mu_t\|.
\]
Let $\Pi : \Cr([0,T];\M(\R_+)) \to \Cr([0,T];\M(\R_+))$ defined by
\[
\Pi : (\mu) \mapsto (\muin M_{0,t}^{(\mu_\bullet([h,\infty)))})_{t\geq 0}
\]
with $\muin$ the initial condition in Equation~\eqref{eq:meas_const}.
We will show that this operator is a contraction. First, simple computations using Corollary~\ref{coro:semigroup} lead to
\[
\|\left(\Pi(\mu)\right)\|_{[0,T]} \leq \|\muin\|,
\]
so the operator $\Pi$ stabilizes the ball of radius $\|\muin\|_{TV}$, and a fixed point would lie in this ball. In addition, it is enough to work with families of measures lying in this ball. Let $(\mu^1)$ and $(\mu^2)$ be such families. For $f\in \mathcal{B}(\R_+)$ and $t\geq 0$, the following inequality holds
\[
\left|\left(\Pi(\mu^1)_t - \Pi(\mu^2)_t\right)f\right| \leq \|\muin\| \left| \left(M_{0,t}^{(\mu^1_\bullet([h,\infty)))} - M_{0,t}^{(\mu^2_\bullet([h,\infty)))}\right)f \right|.
\]
We compute
\begin{align*}
&\left| \left(M_{0,t}^{(\mu^1_\bullet([h,\infty)))} - M_{0,t}^{(\mu^2_\bullet([h,\infty)))}\right)f \right|  \leq \|f\|_\infty\left|\left(\e^{-\frac{2\eta}{3}\int_0^t \mu^1_s([h,\infty))\d s} - \e^{-\frac{2\eta}{3}\int_0^t \mu^2_s([h,\infty))(s)\d s}\right)\right| \\
& \qquad + \frac{2\eta}{3}\|f\|_\infty\left(\int_0^t \left|(\mu^1_s - \mu^2_s)([h,\infty))\right|\e^{-\frac{2\eta}{3}\int_s^t \mu^1_\sigma([h,\infty))\d \sigma}\d s \right.\\
&\qquad \qquad + \left.\int_0^t\left|\mu^2_s([h,\infty))\left(\e^{-\frac{2\eta}{3}\int_s^t \mu^1_\sigma([h,\infty))\d \sigma} - \e^{-\frac{2\eta}{3}\int_s^t \mu^2_\sigma([h,\infty))\d \sigma}\right)\right|\d s\right)\\
& \leq \frac{2\eta}{3}\|f\|_\infty \e^{\frac{2\eta}{3}\|\muin\| T}\left(2 + \frac{2\eta}{3}\|\muin\| \frac{T}{2}\right)T \|(\mu^1) - (\mu^2)\|_{[0,T]}
\end{align*}
from which we deduce
\[
\|\left(\Pi(\mu^1)\right) - \left(\Pi(\mu^2)\right)\|_{[0,T]} \leq \frac{2\eta}{3} \e^{\frac{2\eta}{3}\|\muin\| T}\left(2 + \frac{\eta}{3}\|\muin\| T\right)T  \|(\mu^1) - (\mu^2)\|_{[0,T]}
\]
so $\Pi$ is a contraction on $\Cr([0,T];\M(\R_+))$ for a final time $T$ small enough.
Due to the stabilization of the ball of radius $\|\muin\|$, one can iterate the fixed point on $[T,2T]$, $[2T,3T]$... changing the initial condition each time. Finally, we obtain a unique fixed point of $\Pi$, globally defined, denoted $(\mu_t)_{t\geq 0}$. This fixed point satisfies, for all $f\in\mathcal{B}(\R_+)$,
\begin{align*}
\langle\mu_t,f\rangle =  \langle \muin,M_{0,t}^{(\mu_\bullet([h,\infty)))}f\rangle &= \langle \muin,f + \int_0^t \mu_s([h,\infty))M_{0,s}^{(\mu_\bullet([h,\infty)))}\frac{\eta}{3}\A f\d s\rangle\\ &= \langle \muin,f\rangle + \int_0^t \langle \muin M_{0,s}^{(\mu_\bullet([h,\infty)))},\mu_s([h,\infty))\frac{\eta}{3}\A f\rangle\d s
\end{align*}
and hence is a measure solution in the sense of Definition~\ref{def:sol}. From now on, we simply denote $\mu_t = \muin M_{0,t}$.
To prove the uniqueness of the solution, let $(\nu)$ be another measure solution with initial condition $\muin$, thus satisfying
\[
\langle\nu_t, f\rangle = \langle\muin, f\rangle + \int_0^t \langle\nu_s, \nu_s([h,\infty))\frac{\eta}{3}\left(\A f\right)\d s\rangle.
\]
Now we compute
\begin{align*}
|(\mu_t - \nu_t)f| &\leq \int_0^t \left|\langle\mu_s - \nu_s,\mu_s([h,\infty))\frac{\eta}{3}\left(\A f\right)\rangle\right|\d s + \int_0^t \left| \langle\nu_s,\left(\mu_s([h,\infty)) - \nu_s([h,\infty))\right)\frac{\eta}{3}\A f\rangle\right|\d s \\
& \leq \frac{4\eta}{3}\|\muin\|\|f\|_\infty\int_0^t \|\mu_s - \nu_s\|\d s
\end{align*}
and we conclude using Grönwall's lemma. Finally, to prove that for a nonnegative initial measure $\mu$, the measure solution at time $t$ is also nonnegative, we write, for $f\geq 0$
\[\langle\mu M_{s,t},f\rangle = \langle\mu,M_{s,t} f\rangle \geq 0\]
by positivity of $M_{s,t}$ on $\mathcal{B}(\R_+)$. 
\end{proof}

\section{Asymptotic behaviors\label{sec:asympt}}

In the first subsection, we use a time rescaling to reduce the study of the asymptotic behavior of the nonlinear problem to one of a linear equation. In the second subsection, we study how the limit obtained, that depends heavily on $h$, behaves when this parameter vanishes.

\subsection{Large time asymptotics}
In this section, we only consider non negative initial measures such that $\muin([h,\infty)) > 0$, otherwise for all $t\geq 0$, one has
\[\|\mu_t - \muin \mathcal{P}_h\| = 0\]and the inequality~\eqref{decay} is automatically statisfied. Let us recall that $(\mu_t)_{t\geq 0}$ denotes the unique measure solution to Equation~\eqref{eq:pde}. There exists a unique solution to Equation~\eqref{eq:mild} with $b = \mu_\bullet([h,\infty))$, simply denoted $f$. In order to obtain the asymptotic behavior in time of the measure solution, we introduce a rescaling in time of the (mild) dual equation, that is linear. Denote
\[
\tilde{f}(t,y) := f(\psi(t),0,y)
\]
with $\psi$ so that $\psi(0) = 0$ and $\tilde{f}$ is solution of
\begin{equation}\label{eq:fonda}
\ddt \tilde{f}(t,y) = \left[\tilde{f}(t,y+h) + \tilde{f}(t,y-h) - 2\tilde{f}(t,y)\right]\mathds{1}_{[h,\infty)}(y),
\end{equation}
\textit{i.e.} $\psi$ satisfies
\[
\psi'(t)\frac{\eta}{3}\mu_{\psi(t)}([h,\infty)) = 1.
\]
Denoting 
\[
\theta(t) = \frac{\eta}{3}\int_0^t \mu_s([h,\infty))\d s
\]
one has
\[
\frac{\d}{\d t}\theta(\psi(t)) = 1.
\]
Since we take $\muin$ non negative, by virtue of Lemma~\ref{prop:adj} and Corollary~\ref{coro:semigroup}, $\theta$ is increasing, and thus invertible. Taking $\psi = \theta^{-1}$ provides the desired rescaling.
\\
Proposition~\ref{prop:adj} with $b(s) = \frac{3}{\eta}$ for all $s\geq 0$ ensures that there exists a unique solution to~\eqref{eq:fonda}. Denoting $f_0$ the initial condition, this solution can be expressed by mean of a (homoeneous in time) semigroup $(N_t)_{t\geq 0}$, \textit{i.e.} $\tilde{f}(t,\cdot) = N_t f_0$. Due to te rescaling relation, one has $M_{0,t}^{(\mu_\bullet([h,\infty))}f_0 = N_{\theta(t)}f_0$, thus one can deduce the asymptotic behavior of $(\mu_t)_{t\geq 0} = \left(\mu M_{0,t}\right)_{t\geq 0}$ from that of $(\mu N_t)_{t\geq 0}$. Taking advantage of its time homogeneity, one can apply Harris-type results, extracted from~\cite{Canizo2021}. To that extent, let us introduce some vocabulary.
\\
A \textit{stochastic operator} is a linear operator $N : \M(\Omega) \to \M(\Omega)$ that preserves mass and positivity. A \textit{stochastic semigroup}
on $\M(\Omega)$ is a family $(N_t)_{t\in[0,\infty)}$ of stochastic operators $N_t : \M(\Omega) \to \M(\Omega)$ such that $N_0 = Id$ and $N_t N_s = N_{t+s}$ for all $s,t \geq 0$. Finally, $(N_t)_{t\in[0,\infty)}$ is a \textit{stochastic semigroup} on $\M_V(\Omega)$ if it is a stochastic semigroup on $\M(\Omega)$ and satisfies a growth estimate
\begin{equation}\label{stoch-semigrp}
\|\mu N_t\|_V \leq C_V\e^{\omega_V t}\|\mu\|_V
\end{equation}
for all $\mu\in\M_V(\Omega)$ and all $t \geq 0$, and for some constants $C_V \geq 1, \omega_V \geq 0$. We may now state the theorem that we shall use to obtain the asymptotic behavior of the semigroup $(N_t)_{t\in[0,\infty)}$.
\begin{theorem}[5.2 from \cite{Canizo2021}]\label{thm:Harris}
Let $V : \Omega \to [1, \infty)$ be a measurable (weight) function and let $(N_t)_{t\geq 0}$ be a stochastic semigroup on $\M_V(\Omega)$. Assume that
\begin{enumerate}
\item the semigroup $(N_t)_{t\geq 0}$ satisfies the semigroup Lyapunov condition: there exists constants $\sigma, b > 0$ such that for all $t \geq 0$ and all $\mu \in \M_V(\Omega)$
\[\|\mu N_t\|_V \leq \e^{-\sigma t}\|\mu\|_V + \frac{b}{\sigma}(1-\e^{-\sigma t})\|\mu\|;\]
\item for some $T > 0$, $N_T$ satisfies the local coupling condition: there exists $0 < \gamma_H < 1$ and $A > 0$ with $b/A < \sigma$ such that
\[
\left(y,z\in \Omega, V(y) + V(z) \leq A\right) \Longrightarrow \|(\delta_y-\delta_z)N_T\|_{TV} \leq 2 \gamma_H.
\] 
\end{enumerate}
Then the semigroup has an invariant probability measure $\mu^* \in \M_V(\Omega)$ which is unique within measures of $\M_V(\Omega)$ with total mass $1$, and there exist $\lambda,C>0$ such that
\[\|\mu N_t\|_V \leq C\e^{-\lambda t}\|\mu\|_V\]
for all $\mu\in \M_V(\Omega)$ such that $\mu(\Omega) = 0$.
\end{theorem}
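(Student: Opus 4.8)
The statement is Theorem~5.2 of~\cite{Canizo2021}, a Harris-type theorem, so the plan is to reproduce the argument of Hairer and Mattingly adapted to a semigroup acting on measures. The governing idea is that neither hypothesis alone suffices: condition~1 contracts the $V$-weighted mass only up to an additive remainder, while condition~2 contracts the total variation only on the sublevel set $S := \{(y,z) : V(y)+V(z)\leq A\}$. An appropriate linear combination of the two norms is nevertheless genuinely contracted. Concretely, for a parameter $\beta>0$ to be fixed later I would introduce the norm
\[
\|\mu\|_\beta := \|\mu\| + \beta\,\|\mu\|_V = \int_\Omega (1+\beta V)\,\d|\mu|,
\]
which is equivalent to $\|\cdot\|_V$ for every fixed $\beta$, and aim to show that $N_T$ is a strict $\|\cdot\|_\beta$-contraction on the hyperplane $\{\mu\in\M_V(\Omega) : \mu(\Omega)=0\}$.

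First I would establish the one-step estimate. Fix a mass-zero $\mu$ with Jordan decomposition $\mu=\mu_+-\mu_-$; since $\mu(\Omega)=0$ the two parts have a common mass $m=\tfrac12\|\mu\|$, and (the case $m=0$ being trivial) writing $p=\mu_+/m$, $q=\mu_-/m$ and choosing any coupling $\pi$ of $(p,q)$ gives $\mu N_T = m\int (\delta_y-\delta_z)N_T\,\d\pi(y,z)$. For the total-variation part I would split this integral over $S$ and $S^c$: on $S$ condition~2 yields $\|(\delta_y-\delta_z)N_T\|\leq 2\gamma_H$, while on $S^c$ the trivial bound $\|(\delta_y-\delta_z)N_T\|\leq 2$ is dominated by the weight through $1\leq (V(y)+V(z))/A$, so that after integration $\|\mu N_T\| \leq \gamma_H\,\|\mu\| + \tfrac{2}{A}\,\|\mu\|_V$. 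For the $V$-part I would simply invoke condition~1 at time $T$, namely $\|\mu N_T\|_V \leq \alpha\|\mu\|_V + \tfrac{b}{\sigma}(1-\alpha)\|\mu\|$ with $\alpha=\e^{-\sigma T}\in(0,1)$. Combining these,
\[
\|\mu N_T\|_\beta \leq \Big(\gamma_H + \beta\tfrac{b}{\sigma}(1-\alpha)\Big)\|\mu\| + \Big(\tfrac{2}{A}+\beta\alpha\Big)\|\mu\|_V .
\]

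The contraction then follows by choosing $\beta$. Dividing the $V$-coefficient by $\beta$, I need a single $\bar\alpha<1$ satisfying $\gamma_H + \beta\tfrac{b}{\sigma}(1-\alpha)\leq\bar\alpha$ and $\alpha + \tfrac{2}{A\beta}\leq\bar\alpha$. The first demands $\beta < \tfrac{(1-\gamma_H)\sigma}{b(1-\alpha)}$ and the second $\beta > \tfrac{2}{A(1-\alpha)}$; such a $\beta$ exists precisely when $\tfrac{2}{A}<\tfrac{(1-\gamma_H)\sigma}{b}$, which is what the standing assumption $b/A<\sigma$ secures (here in the slightly stronger crude form $b/A<(1-\gamma_H)\sigma/2$ produced by the loose bound $\|(\delta_y-\delta_z)N_T\|\leq 2$ off $S$). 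This balancing is the heart of the matter and the step I expect to be the main obstacle: extracting an \emph{explicit} admissible pair $(\beta,\bar\alpha)$ requires sharpening the above estimates so that exactly $b/A<\sigma$ is used, and then tracking the interplay of $\sigma,b,A,\gamma_H,\alpha$ carefully enough to produce the promised explicit constants rather than mere existence.

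Once $\|\mu N_T\|_\beta\leq\bar\alpha\|\mu\|_\beta$ is in hand, the conclusions are routine. Iterating gives $\|\mu N_{nT}\|_\beta\leq\bar\alpha^{\,n}\|\mu\|_\beta$; filling in $t\in[nT,(n+1)T)$ with the growth estimate~\eqref{stoch-semigrp} and using the equivalence of $\|\cdot\|_\beta$ and $\|\cdot\|_V$ upgrades this to $\|\mu N_t\|_V\leq C\e^{-\lambda t}\|\mu\|_V$ for every mass-zero $\mu$, with $\lambda=-\tfrac1T\log\bar\alpha$ and $C$ explicit. For existence and uniqueness of the invariant measure I would apply the Banach fixed point theorem to the $\bar\alpha$-contraction $N_T$ on the complete metric space of probability measures with finite $V$-moment equipped with $d_\beta(\mu,\nu)=\|\mu-\nu\|_\beta$, obtaining a unique fixed point $\mu^*$ of $N_T$; that $\mu^*$ is invariant for the whole semigroup follows because $\mu^* N_s$ is again a fixed point of $N_T$ by the semigroup law, hence equals $\mu^*$.
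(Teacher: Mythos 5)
This statement is quoted verbatim from \cite{Canizo2021} (their Theorem 5.2) and is used in the paper as a black box: there is no proof of it in the present article to compare against, only the reminder in Section 6.1 of how the constants $\beta$, $\gamma$, $C$, $\lambda$ are produced in the source. Your strategy --- the Hairer--Mattingly coupling argument with the combined norm $\|\mu\|+\beta\|\mu\|_V$, contraction of $N_T$ on the mass-zero hyperplane, splitting of the coupling integral over the sublevel set $S=\{V(y)+V(z)\leq A\}$ and its complement, then iteration and a fixed-point argument for the invariant measure --- is exactly the family of argument the cited reference uses, and the final assembly (iteration at time $T$, interpolation via the growth estimate, Banach fixed point for $\mu^*$, invariance under the whole semigroup via the semigroup law) is fine.

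There is, however, a genuine gap, which you flag but do not close, and it is not cosmetic. Your crude bound $\|(\delta_y-\delta_z)N_T\|\leq 2\leq \frac{2}{A}(V(y)+V(z))$ off $S$ forces the compatibility condition $b/A<(1-\gamma_H)\sigma/2$ for an admissible $\beta$ to exist, which is strictly stronger than the stated hypothesis $b/A<\sigma$. This is not merely a loss of sharpness in the constants: in the very application made of the theorem in this paper (Proposition~\ref{prop:decay_mu_class}) one has $b=2\sigma$, $A=3$ and $\gamma_H>1/2$, so $b/A=2\sigma/3<\sigma$ holds while $(1-\gamma_H)\sigma/2<\sigma/4<2\sigma/3$ fails --- your version of the theorem would not apply where the paper needs it. Closing the gap requires replacing the uniform bound by $2$ off $S$ with an estimate of the full weighted quantity $\|(\delta_y-\delta_z)N_T\|+\beta\|(\delta_y-\delta_z)N_T\|_V$ in which both the constant $2$ and the additive Lyapunov remainder are absorbed into $\beta(V(y)+V(z))$ using $V(y)+V(z)>A$, together with the careful optimization over $\beta$ that produces the quadratic equation for $\beta$ and the expression $\gamma=\max\{\gamma_H+\beta K,\,1-\frac{\beta}{1+\beta}(1-\gamma_L-K/A)\}$ recalled in Section~\ref{sec:num}; note that the term $K/A$ there (rather than your effective $2K/A$) shows the source's bookkeeping is genuinely finer than the ``sharpening'' you defer to. As written, your argument proves a weaker theorem than the one stated.
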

Now we obtain an exponential decay for $(N_t)_{t\geq 0}$ acting on meausures defined on a given class of wealth $\mathfrak{C}_x$.
\begin{prop}\label{prop:decay_mu_class}
Fix $x\in[0,h)$ and define the weight function $V$ on $\mathfrak{C}_x$ by $V(y) = 2 - \e^{-\alpha(x)y}$ with $\alpha(x) = \frac{2\log 2}{2x+h}$. Consider the action of $(N_t)_{t\geq 0}$ to $\M_V(\mathfrak{C}_x)$ and $\mathcal{B}_V(\mathfrak{C}_x)$. Then there exists $\lambda,C>0$ independant of $x$ and a unique invariant probability measure $\mu^*_x$ such that
\[
\|\mu N_t - \mu^*_x\|_V \leq C\e^{-\lambda t} \|\mu - \mu^*_x\|_V.
\]
\end{prop}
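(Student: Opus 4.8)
The plan is to apply Theorem~\ref{thm:Harris} to the homogeneous semigroup $(N_t)_{t\ge0}$ on a single class $\mathfrak{C}_x$, after three preliminary observations. Writing $m_k:=\mu(\{x+kh\})$ and $f_k:=f(x+kh)$, the generator reads $(\A f)_k=f_{k+1}+f_{k-1}-2f_k$ for $k\ge1$ and $(\A f)_0=0$; thus $(N_t)$ is the semigroup of a continuous-time nearest-neighbour random walk on the index set $\N$ for which the state $k=0$ (the point $y=x$) is \emph{absorbing}. Crucially, in these index coordinates the walk does not depend on $x$: the parameter $x$ enters only through the weight $V$ and the embedding $k\mapsto x+kh$. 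The stationarity equation for the measure dynamics forces $m_1^*=0$ and then $m_k^*=0$ for all $k\ge1$, so the unique invariant probability measure is $\mu_x^*=\delta_x$. Finally, since $1\le V<2$, the positivity and conservativity of $N_t$ from Corollary~\ref{coro:semigroup} give $\|\mu N_t\|_V\le 2\|\mu N_t\|\le 2\|\mu\|\le 2\|\mu\|_V$, so $(N_t)$ is a stochastic semigroup on $\M_V(\mathfrak{C}_x)$ in the sense of~\eqref{stoch-semigrp}, with $C_V=2$ and $\omega_V=0$.

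For the Lyapunov condition I would compute the drift directly. Setting $c=\e^{-\alpha(x)x}$ and $q=\e^{-\alpha(x)h}$ one has $V_k=2-cq^k$, so for $k\ge1$
\[
(\A V)_k=V_{k+1}+V_{k-1}-2V_k=-c\,q^{k-1}(1-q)^2\le 0,
\]
while $(\A V)_0=0$. Hence $(\A V)_k+\sigma V_k=2\sigma-cq^{k-1}\big[(1-q)^2+\sigma q\big]<2\sigma$ for $k\ge1$, and equals $\sigma(2-c)<2\sigma$ for $k=0$, so $\A V\le -\sigma V+b$ holds for every $\sigma>0$ with $b=2\sigma$. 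Propagating this pointwise bound through the semigroup (writing $w(t)=N_tV$, so $\dot w=N_t\A V\le -\sigma w+b\1$ by positivity and $N_t\1=\1$, then applying Grönwall and the duality $\langle\mu N_t,V\rangle=\langle\mu,N_tV\rangle$) yields exactly the semigroup Lyapunov condition. This is the ``unusual'' confining condition: because $V$ is bounded the drift cannot confine to a proper sublevel set, and $b/\sigma=2$ is the best achievable ratio; its only purpose is to leave room for the constraint $b/A<\sigma$, i.e. $A>2$.

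The coupling condition carries the real content. I would fix $A\in(2,3)$, so that $b/A<\sigma$, and observe that since $\alpha(x)=\tfrac{2\log2}{2x+h}$ stays in a bounded interval for $x\in[0,h)$, the pair $(c,q)$ ranges over a compact subset of $(0,1]\times(0,1)$ lying on the curve $c^2q=\tfrac14$, with $c\in(2^{-2/3},1]$ and $q\in[\tfrac14,2^{-2/3})$. Consequently the small set $\{(y,z):V(y)+V(z)\le A\}=\{(k,l):c(q^k+q^l)\ge 4-A\}$ is contained in $\{k,l\le K\}$ for an integer $K$ uniform in $x$; the upper cut-off $A<3$ is what keeps this set bounded even at $x=0$, where $V(x)=1$. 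On this finite index set I would bound the coupling by the common mass the two walks place on the absorbing state: for $0\le k\le K$ the absorption probability $P_k(\tau_0\le T)$ before time $T$ is positive and, being independent of $x$, admits a uniform lower bound $\epsilon_0:=\min_{0\le k\le K}P_k(\tau_0\le T)>0$. Since $\delta_yN_T$ and $\delta_zN_T$ then overlap by at least $\epsilon_0$ at the point $x$,
\[
\|(\delta_y-\delta_z)N_T\|_{TV}\le 2(1-\epsilon_0)=:2\gamma_H<2
\]
for every $(y,z)$ in the small set, with $T$ and $\gamma_H$ independent of $x$.

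With both hypotheses verified with constants $\sigma,b,A,\gamma_H,T$ independent of $x$, Theorem~\ref{thm:Harris} produces $\lambda,C>0$ (depending only on those constants, hence uniform in $x$) and the invariant probability $\mu_x^*$; applying its conclusion to the zero-mass measure $\mu-\mu_x^*$ and using $\mu_x^*N_t=\mu_x^*$ gives the stated estimate. The main obstacle is the coupling step: one must simultaneously pick $A$ in the narrow window $(2,3)$ forced by the bounded Lyapunov function --- $A>2$ for $b/A<\sigma$ and $A<3$ to keep the small set bounded at $x=0$ --- and extract an absorption lower bound uniform over both the small set and the parameter $x$. It is precisely the $x$-independence of the random walk together with the uniform control of $(c,q)$ coming from the identity $c^2q=\tfrac14$ that makes this uniformity possible.
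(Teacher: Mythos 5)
Your proposal is correct and shares the paper's skeleton --- the same weight $V$, the same Lyapunov estimate $\A V\le-\sigma V+2\sigma$ (hence $b=2\sigma$ and the constraint $A>2$), and an application of Theorem~\ref{thm:Harris} --- but it takes a genuinely different, and more robust, route at the local coupling step. The paper fixes $A=3$ and claims that the choice $\alpha(x)=\frac{2\log 2}{2x+h}$ shrinks the small set $\{V(y)+V(z)\le 3\}$ to the two pairs $(x,x)$ and $(x,x+h)$, for which coupling follows from the explicit Duhamel formula for $N_tf(x+h)$, giving $\gamma_H=\frac{1+\e^{-2T}}{2}$; but its third tuning condition $2\e^{-\alpha h}<\e^{\alpha x}$ only excludes the pair $(x+h,x+h)$, whereas the pairs $(x,x+kh)$, $k\ge2$, require $1+\e^{-k\alpha h}<\e^{\alpha x}$, which fails at $x=0$ (where $\e^{\alpha x}=1$) and for small $x>0$; with $A=3$ the small set is then infinite and the coupling bound degenerates along it, since $\|(\delta_x-\delta_{x+kh})N_T\|\to2$ as $k\to\infty$. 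Your choice $A\in(2,3)$ keeps the small set contained in a finite block $\{k,l\le K\}$ uniformly in $x\in[0,h)$ (via the uniform control of $c=\e^{-\alpha x}$ and $q=\e^{-\alpha h}$), and your Doeblin-type minorization by the mass absorbed at the state $x$ --- which is $x$-independent in the index coordinates $k$ --- yields a uniform $\gamma_H<1$ on the whole small set; this both proves the proposition and repairs the paper's argument. Two minor differences: you identify $\mu^*_x=\delta_x$ directly from the stationarity equations, which the paper only does in the subsequent dual proposition; and your growth constant $C_V=2$ is coarser than the paper's $C_V=1$ (obtained by a Gr\"onwall argument on $\|N_tV\|_V$), which changes the explicit numerical constants of Section~\ref{sec:num} but not the statement, since only the existence of $\lambda,C>0$ independent of $x$ is claimed.
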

\begin{proof}
First note for all $y\geq h$, one has $V(y-h) + V(y+h) \leq 2V(y)$. Since $1\leq V(y) \leq 2$, we readily obtain that $(N_t)_{t\geq 0}$ is well-defined on $\M_V(\mathfrak{C}_x)$. In addition, we compute  for $y\geq h$
\begin{align*}
N_tV(y) &= V(y)\e^{-2t} + \int_0^t \e^{-2(t-s)}\left[V(y+h)\frac{N_s V(y+h)}{V(y+h)} + V(y-h)\frac{N_s V(y-h)}{V(y-h)}\right]\d s \\
&\leq V(y)\e^{-2t}\left[1+\int_0^t 2\e^{2s}\|N_s V\|_V\d s\right]
\end{align*}
and deduce from it and Gronwall's Lemma that for all $t\geq 0$, one has $\|N_t V\|_V \leq 1$. It is then straightforward to obtain that $(N_t)_{t\geq0}$ is a stochastic semigroup on $\M_V(\mathfrak{C}_x)$ with $C_V = 1$ and $\omega_V = 0$. Now we prove that it satisfies a Lyapunov condition. Let $\mu\in\M_V(\mathfrak{C}_x)$ and $f\in\mathcal{B}_V(\mathfrak{C}_x)$. For any $\sigma > 0$, we easily obtain that $\A V + \sigma V \leq 2\sigma$, so $\A(V-2) \leq -\sigma(V-2)$ and by positivity of the semigroup $\p_t N_t(V-2) \leq -\sigma N_t(V-2)$, so in turn $N_t(V-2) \leq \e^{-\sigma t}(V-2)$ using Gronwall's Lemma. This inequality provides
\[N_t V \leq \e^{-\sigma t}V + 2(1-\e^{-\sigma t})\] that leads to the Lyapunov condition
\[
\|\mu N_t\|_V \leq \e^{-\sigma t}\|\mu\|_V + 2(1-\e^{-\sigma t})\|\mu\|.
\]
We now turn to the local coupling condition. We can set $A = 3$ and consider the set of elements $y,z$ of $\mathfrak{C}_x$ such that $V(y) + V(z) \leq 3$. Let us explain how we chose this particular weight function, and to that extent temporarily consider a general $V(y) = 2 - \e^{-\alpha y}$ defined on $\mathfrak{C}_x$. We rewrite $y = x+k_1 h$ and $z = x + k_2 h$, and without loss of generality take $k_1 \leq k_2$. The previous condition is thus equivalent to $\e^{-\alpha k_1 h} + \e^{-\alpha k_2 h} \geq \e^{\alpha x}$. Now we tune $\alpha$ so that the aformentionned set only contains the pairs $(x,x)$ and $(x,x+h)$, corresponding to the conditions
\[2\geq \e^{\alpha x}, \qquad 1 + \e^{-\alpha h} \geq \e^{\alpha x} \qquad \text{and} \qquad 2\e^{-\alpha h} < \e^{\alpha x}.\] When $x>0$, the first and third conditions lead to $\frac{\log 2}{x+h} < \alpha \leq \frac{\log 2}{x}$. Taking $\alpha$ as the harmonic mean of these two bounds, \textit{i.e.} \[\alpha(x) = \frac{2\log 2}{2x + h},\]
fulfills these conditions, as well as the second one. In the case $x=0$, only the third is not automatically fulfilled, and it is equivalent to
\[
\alpha > \frac{\log 2}{h} 
\]
so the previous values also works. In this setting, in order to obtain the desired inequality, it is enough to study the total variation norm of $(\delta_x - \delta_{x+h})N_t = \delta_x - \delta_{x+h}N_t$. Let $f\in \mathcal{B}(\mathfrak{C}_x)$. One has
\[
N_t f(x+h) = \frac{1-\e^{-2t}}{2}f(x) + f(x+h)\e^{-2t} + \int_0^t \e^{-2(t-s)}N_sf(x+2h)\d s
\]
so
\[\|(\delta_x - \delta_{x+h})N_t\| \leq 2\frac{1+\e^{-2t}}{2}\] which gives a local coupling condition. To conclude, we note that the constants $\lambda$ and $C$ are explicitely given in term of the constants that appear in the two hypoteses of Theorem~\ref{thm:Harris}. In our case $\sigma$ is any positive real number, $b = 2\sigma$ and $A = 3$ so are independant of $x$.
\end{proof}
We now prove a dual version of the exponential decay of $(N_t)_{t\geq 0}$, seen as an operator acting on $\mathcal{B}_V(\mathfrak{C}_x)$ and identify the limit.
\begin{prop}
Fix $x\in[0,h)$ and define the weight function $V$ on $\mathfrak{C}_x$ by $V(y) = 2 - \e^{-\alpha(x)y}$ with $\alpha(x) = \frac{2\log 2}{2x + h}$. Then there exists $\lambda,C>0$ such that
\[
\|N_t f - f(x)\1_{\mathfrak{C}_x}\|_{\mathcal{B}_V(\mathfrak{C}_x)} \leq C\e^{-\lambda t} \|f - f(x)\1_{\mathfrak{C}_x}\|_{\mathcal{B}_V(\mathfrak{C}_x)}
\]
for all $f\in\mathcal{B}_V(\mathfrak{C}_x)$.
\end{prop}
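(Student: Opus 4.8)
The plan is to deduce this dual decay from the primal decay of Proposition~\ref{prop:decay_mu_class} by exploiting the duality identity $\langle \mu N_t, g\rangle = \langle \mu, N_t g\rangle$ together with the observation that the dual ergodic projection is nothing but evaluation at the base point $x$. Indeed, since $\A$ annihilates functions on $[0,h)$ and $x\in[0,h)$, Corollary~\ref{coro:semigroup} gives $N_t f(x) = f(x)$ for every $t\geq 0$; dually this means $\delta_x N_t = \delta_x$, so $\delta_x$ is the unique invariant probability measure $\mu^*_x$ of Proposition~\ref{prop:decay_mu_class}, and the candidate limit is $f(x)\1_{\mathfrak{C}_x} = \langle \delta_x, f\rangle\, \1_{\mathfrak{C}_x}$.

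First I would reduce to a centered test function. Writing $g := f - f(x)\1_{\mathfrak{C}_x}$, the conservativity $N_t \1_{\mathfrak{C}_x} = \1_{\mathfrak{C}_x}$ from Corollary~\ref{coro:semigroup} gives $N_t f - f(x)\1_{\mathfrak{C}_x} = N_t g$, while $g(x) = 0$. It therefore suffices to prove $\|N_t g\|_{\mathcal{B}_V(\mathfrak{C}_x)} \leq C\e^{-\lambda t}\|g\|_{\mathcal{B}_V(\mathfrak{C}_x)}$ for every $g$ with $g(x)=0$. Fixing $y = x+kh \in \mathfrak{C}_x$ and testing against the Dirac mass,
\[
N_t g(y) = \langle \delta_y, N_t g\rangle = \langle \delta_y N_t, g\rangle = \langle (\delta_y - \delta_x)N_t, g\rangle,
\]
where the last equality uses $\delta_x N_t = \delta_x$ together with $\langle \delta_x, g\rangle = g(x) = 0$.

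Now the signed measure $\delta_y - \delta_x$ has zero mass, so the exponential decay on zero-average measures established in Proposition~\ref{prop:decay_mu_class} (the conclusion of Theorem~\ref{thm:Harris}) applies with the very same $\lambda, C$, yielding
\[
\|(\delta_y - \delta_x)N_t\|_V \leq C\e^{-\lambda t}\|\delta_y - \delta_x\|_V = C\e^{-\lambda t}\bigl(V(x)+V(y)\bigr).
\]
Combining this with the duality estimate $|\langle \nu, g\rangle| \leq \|\nu\|_V\,\|g\|_{\mathcal{B}_V(\mathfrak{C}_x)}$ gives $|N_t g(y)| \leq C\e^{-\lambda t}\bigl(V(x)+V(y)\bigr)\|g\|_{\mathcal{B}_V(\mathfrak{C}_x)}$. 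Since $V$ is nondecreasing on $\mathfrak{C}_x$ and $y\geq x$, one has $V(x)\leq V(y)$, hence $\bigl(V(x)+V(y)\bigr)/V(y) \leq 2$; dividing by $V(y)$ and taking the supremum over $y\in\mathfrak{C}_x$ yields $\|N_t g\|_{\mathcal{B}_V(\mathfrak{C}_x)} \leq 2C\e^{-\lambda t}\|g\|_{\mathcal{B}_V(\mathfrak{C}_x)}$, which is the claim after renaming $2C$ into $C$.

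The argument is short precisely because it is purely dual, and so the main obstacle is not any estimate but the structural facts feeding it: identifying $\delta_x$ as the invariant measure (so that centering by $f(x)\1_{\mathfrak{C}_x}$ is the correct normalization), checking that $N_t$ maps $\mathcal{B}_V(\mathfrak{C}_x)$ into itself so the statement is meaningful (which follows from $\|N_t V\|_V \leq 1$ obtained in Proposition~\ref{prop:decay_mu_class}), and extending the duality identity to signed measures. Once these are secured, the weight-ratio bound $\bigl(V(x)+V(y)\bigr)/V(y)\leq 2$ transfers the measure-side rate to the function side without any loss in $\lambda$.
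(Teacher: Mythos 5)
Your proof is correct, and while it shares with the paper the key final move --- dualizing the measure-side Harris decay of Proposition~\ref{prop:decay_mu_class} --- it reaches the identification of the limit by a genuinely different and more economical route. The paper spends most of its proof establishing the pointwise convergence $N_t\1_{\{x\}}\to\1_{\mathfrak{C}_x}$ by a monotonicity argument ($\p_t N_t\1_{\{x\}}(x+kh)=N_t\1_{\{x+h\}}(x+kh)\geq 0$, hence a bounded increasing limit $(l_k)$ which is forced to be constant by $\A(\sum_k l_k\1_{\{x+kh\}})=0$), then a sandwiching and a shifted Hahn--Jordan decomposition to extend this to general $f$, and only afterwards combines this with the dual of Proposition~\ref{prop:decay_mu_class} to conclude $\mu^*_x=\delta_x$. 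You instead observe that $N_tg(x)=g(x)$ (because $\1_{[h,\infty)}(x)=0$ in the mild equation), so $\delta_x N_t=\delta_x$, and invoke the uniqueness of the invariant probability measure in $\M_V(\mathfrak{C}_x)$ to get $\mu^*_x=\delta_x$ in one line, bypassing the entire pointwise-convergence analysis. You also make explicit the step the paper leaves implicit (``taking the dual version of the inequality''): testing $N_tg$ against $\delta_y$, applying the zero-mass decay to $\delta_y-\delta_x$, and controlling the ratio $\bigl(V(x)+V(y)\bigr)/V(y)\leq 2$ using the monotonicity of $V$ on $\mathfrak{C}_x$ --- this costs a harmless factor $2$ in the constant but no loss in the rate $\lambda$. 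Both approaches are valid; the paper's pointwise argument has the side benefit of exhibiting the convergence mechanism concretely, whereas yours is shorter and makes the duality bookkeeping airtight.
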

\begin{proof}
Let $k\in\N$. One has $\A\mathds{1}_{\{x\}}(x+kh) = \mathds{1}_{\{x+h\}}(x+kh)$, so for all $t\geq 0$ and $k\geq 1$, $\p_t N_t\mathds{1}_{\{x\}}(x+kh) = N_t\mathds{1}_{\{x+h\}}(x+kh) \geq 0$. Since $N_t\mathds{1}_{\mathfrak{C}_x} = \mathds{1}_{\mathfrak{C}_x}$, the positivity of $N_t$ implies that $N_t\mathds{1}_{\{x\}}(x+kh)$ converges to $l_k \leq 1$ for all $k\in\N$. In particular, $l_0=1$, and the limit function satisfies \[0 \equiv \A\left(\sum_{k=0}^\infty l_k \mathds{1}_{\{x+kh\}}\right) = \sum_{k=0}^\infty l_k \A\mathds{1}_{\{x+kh\}} = \sum_{k=1}^\infty \left(l_{k+1} + l_{k-1} - 2l_k\right)\mathds{1}_{\{x+kh\}},\]
so $l_k = 1$ for all $k\in\N$, and finally $N_t\mathds{1}_{\{x\}} \longrightarrow \mathds{1}_{\mathfrak{C}_x}$ as $t\to \infty$ for any starting time $s\geq0$. Now for any $f\geq 0$ such that $f(x+kh) \leq 1$ for all $k\in\N$, one has the inequalities
\[\1_{\{x\}} \leq f \leq \1_{\mathfrak{C}_x},\] and since $N_t$ is a positive and conservative operator on $\B(\mathfrak{C}_x)$ for all $t\geq À$, we obtain that for any such $f$, $N_tf$ converges pointwise toward $\1_{\mathfrak{C}_x}$. By linearity, for any nonnegative $f$ such that $f(x+kh) \leq f(x)$ for all $k\in\N$, the function $N_t f$ converges pointwise toward $f(x)\1_{\mathfrak{C}_x}$. Finally, for any $f\in\mathcal{B}_V(\mathfrak{C}_x)$, we modify the usual decomposition to obtain
\[f = f_+ - f_- = (f_+ + \|f\|_{\mathcal{B}_V(\mathfrak{C}_x)} \1_{\{x\}}) - (f_- + \|f\|_{\mathcal{B}_V(\mathfrak{C}_x)} \1_{\{x\}}),\]
and deduce that $N_t f$ converges pointwise toward $(f_+(x) + \|f\|_\infty)\1_{\mathfrak{C}_x} - (f_-(x) + \|f\|_\infty)\1_{\mathfrak{C}_x}$, that is $f(x)\1_{\mathfrak{C}_x}$.
\\
Now taking the dual version of the inequality from Proposition~\ref{prop:decay_mu_class} applied on $\mu - \mu^*_x$ with $\mu(\mathfrak{C}_x)=1$ provides
\[\|N_t f - \langle \mu^*_x,f \rangle\|_{\mathcal{B}_V(\mathfrak{C}_x)} \leq C\e^{-\lambda t} \|f - \langle \mu^*_x,f \rangle\|_{\mathcal{B}_V(\mathfrak{C}_x)}\]
and it remains to identify the invariant measure. From this inequality and the pointwise convergence, we obtain that $\langle \mu^*_x,f \rangle = f(x)$ for all $f\in\mathcal{B}_V(\mathfrak{C}_x)$, \textit{i.e.} $\mu^*_x = \delta_x$.
\end{proof}
Of note, the idea to decompose a function $f$ as such on $\mathfrak{C}_x$ is inspired by the framework developped in~\cite{Mukhamedov2022} and previous related articles, in a first attempt to apply those results to the present problem.
\\
We are now ready to provide a result on exponential decay for the semigroup $(N_t)_{t\geq 0}$ acting on $\M(\mathfrak{C}_x)$.
\begin{prop}\label{prop:decay_mu_N}
Let $V(y) = 2 - \e^{-\tilde{\alpha}(y)y}$ with $\tilde{\alpha}(y) := \frac{2\log 2}{2 mod(y,h) + h}$. Then there exists constants $\lambda,C>0$ and a projector operator $\mathcal{P}_h$ acting on $\M_V(\R_+)$ such that
\[
\|\mu N_t - \mu\mathcal{P}_h\|_V \leq C\e^{-\lambda t} \|\mu - \mu\mathcal{P}_h\|_V
\]
for all $\mu\in\M_V(\R_+)$ with $\mathcal{P}_h$ as stated in Theorem~\ref{thm:main}.
\end{prop}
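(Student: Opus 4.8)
The plan is to assemble the global statement on $\M_V(\R_+)$ by decomposing everything class by class and quoting Proposition~\ref{prop:decay_mu_class}. The starting observation is that the dynamics of $(N_t)_{t\geq 0}$ never mixes distinct classes $\mathfrak{C}_x$: since $\A$ only shifts by $\pm h$ and the indicator $\1_{[h,\infty)}$ kills the left shift at the boundary, a Dirac at $x+kh$ evolves into a measure supported on $\mathfrak{C}_x = \{x+kh : k\in\N\}$ alone. Consequently, for any $\mu\in\M_V(\R_+)$ I would write the canonical disintegration $\mu = \sum_{x\in[0,h)}\mu_x$ where $\mu_x$ is the restriction of $\mu$ to $\mathfrak{C}_x$, and observe that $\mu N_t = \sum_{x\in[0,h)}\mu_x N_t$ with each $\mu_x N_t$ still supported on $\mathfrak{C}_x$. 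The crucial point is that the weight $V(y) = 2 - \e^{-\tilde\alpha(y)y}$ defined here, with $\tilde\alpha(y) = \tfrac{2\log 2}{2\,mod(y,h)+h}$, restricts on each class $\mathfrak{C}_x$ to exactly the weight $y\mapsto 2 - \e^{-\alpha(x)y}$ of Proposition~\ref{prop:decay_mu_class}, because $mod(y,h)=x$ for every $y\in\mathfrak{C}_x$. So the per-class estimates apply verbatim.

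Next I would define the projector. For $\mu\in\M_V(\R_+)$ set
\[
\mu\mathcal{P}_h := \sum_{x\in[0,h)} \mu_x(\mathfrak{C}_x)\,\delta_x.
\]
By the identification $\mu^*_x = \delta_x$ from the previous proposition, this is precisely the class-wise invariant projection, and $\mathcal{P}_h$ is idempotent because $(\delta_x)\mathcal{P}_h = \delta_x$. One checks directly that $\supp \mu\mathcal{P}_h\subset[0,h)$ and that for a measurable $A\subset[0,h)$ one has $\mu\mathcal{P}_h(A) = \sum_{k=0}^\infty \mu(A+kh)$, matching the formula in Theorem~\ref{thm:main}. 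Since $\mu_x N_t$ has the same mass $\mu_x(\mathfrak{C}_x)$ as $\mu_x$ for all $t$ (conservativity of $N_t$ on each class), the measure $\mu_x - \mu_x\mathcal{P}_h = \mu_x - \mu_x(\mathfrak{C}_x)\delta_x$ has total mass zero on $\mathfrak{C}_x$, which is exactly the hypothesis under which Proposition~\ref{prop:decay_mu_class} yields
\[
\|\mu_x N_t - \mu_x\mathcal{P}_h\|_V \leq C\e^{-\lambda t}\,\|\mu_x - \mu_x\mathcal{P}_h\|_V,
\]
with $C,\lambda$ \emph{independent of} $x$, as emphasised at the end of that proof.

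The final step is to sum over classes. Because the measures $\mu_x N_t - \mu_x\mathcal{P}_h$ are supported on pairwise disjoint sets, the weighted total variation norm is additive: $\|\mu N_t - \mu\mathcal{P}_h\|_V = \sum_{x\in[0,h)}\|\mu_x N_t - \mu_x\mathcal{P}_h\|_V$, and likewise for the right-hand side. Summing the per-class inequalities and pulling the uniform constant $C\e^{-\lambda t}$ out of the sum gives the claimed bound on $\M_V(\R_+)$. The main obstacle I anticipate is a measure-theoretic bookkeeping one rather than an analytic one: making the uncountable decomposition $\mu=\sum_{x\in[0,h)}\mu_x$ rigorous (only countably many $\mathfrak{C}_x$ carry mass for a finite measure, so the sum is effectively countable) and justifying the exact additivity of $\|\cdot\|_V$ across the disjoint supports, including for the zero-mass differences. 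The $x$-uniformity of $C$ and $\lambda$ is the one genuinely load-bearing input, and it is guaranteed only because the constants in Theorem~\ref{thm:Harris} depend on $\sigma,b,A$, which were chosen independent of $x$ in Proposition~\ref{prop:decay_mu_class}; I would state this dependence explicitly before summing.
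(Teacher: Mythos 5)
Your argument is correct and rests on the same two pillars as the paper's proof: the invariance of each class $\mathfrak{C}_x$ under $(N_t)_{t\geq0}$, and the $x$-uniformity of the constants coming out of the Harris theorem in Proposition~\ref{prop:decay_mu_class} (you are right that this is the one genuinely load-bearing input, and right about why it holds). The execution differs in which side of the duality you work on. The paper defines $\mathcal{P}_h$ on \emph{functions} by $f\mapsto\sum_{x\in[0,h)}f(x)\1_{\mathfrak{C}_x}$, proves the contraction in $\mathcal{B}_V(\R_+)$ as a \emph{supremum} over classes using the dual estimate $\|N_tf_{|\mathfrak{C}_x}-f(x)\1_{\mathfrak{C}_x}\|_{\mathcal{B}_V(\mathfrak{C}_x)}\leq C\e^{-\lambda t}\|f_{|\mathfrak{C}_x}-f(x)\1_{\mathfrak{C}_x}\|_{\mathcal{B}_V(\mathfrak{C}_x)}$, then transfers to measures by dualizing and only identifies the action of $\mathcal{P}_h$ on measures a posteriori via Riesz representation. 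You instead stay on the measure side throughout: you define $\mu\mathcal{P}_h=\sum_x\mu_x(\mathfrak{C}_x)\delta_x$ directly, apply Proposition~\ref{prop:decay_mu_class} to each zero-mass difference $\mu_x-\mu_x(\mathfrak{C}_x)\delta_x$ (borrowing the identification $\mu^*_x=\delta_x$ from the intermediate dual proposition), and \emph{sum} the resulting inequalities using additivity of $\|\cdot\|_V$ over mutually singular pieces. Your route makes the formula $\mu\mathcal{P}_h(A)=\sum_{k\geq0}\mu(A+kh)$ immediate from the definition and avoids the final dualization step (which implicitly uses $\mathcal{P}_hN_t=\mathcal{P}_h$ on functions); its cost is exactly the bookkeeping you flag — restricting to the countably many classes carrying mass and justifying the exact additivity of the weighted total variation norm across disjoint supports — which the supremum-over-classes formulation sidesteps. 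Both arguments are sound and yield the same constants.
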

\begin{proof}
We define the projection operator $\mathcal{P}_h$ by
\[
\mathcal{P}_h:\left\{\begin{array}{l}
\mathcal{B}(\R_+) \to \mathcal{B}(\R_+) \vspace{4mm}\\
f \mapsto \sum_{x\in [0,h)}f(x)\mathds{1}_{\mathfrak{C}_x}
\end{array}\right.
\]
Using the fact that any $f\in\mathcal{B}_V(\R_+)$ can be written as
\[f = \sum_{x\in[0,h)} f_{|\mathfrak{C}_x},\]
we compute
\begin{align*}
\|N_t f - \mathcal{P}_hf\|_{\mathcal{B}_V(\R_+)} &= \sup_{x\in [0,h)} \|N_t f_{|\mathfrak{C}_x} - \mathcal{P}_hf_{|\mathfrak{C}_x}\|_{\mathcal{B}_V(\mathfrak{C}_x)}\\
&\leq \sup_{x\in [0,h)} C\e^{-\lambda t}\|f_{|\mathfrak{C}_x} - \mathcal{P}_hf_{|\mathfrak{C}_x}\|_{\mathcal{B}_V(\mathfrak{C}_x)}\\
&\leq C\e^{-\lambda t}\|f - \mathcal{P}_hf\|_{\mathcal{B}_V(\R_+)}.
\end{align*}
The decay in weighted total variation norm is proved by taking the dual estimate of this last inequality. It remains to make explicit the action of $\mathcal{P}_h$ on measures. It is clear that on the space of function, it repeats the pattern of $f$ on $[0,h)$ on each interval $[kh,(k+1)h)$. The Riesz representation theorem states that for every measure $\mu\in \M(\R_+)$, one can define a measure denoted $\mu\mathcal{P}_h$ such that for all $f\in\mathcal{B}(\R_+)$ one has $\langle \mu\mathcal{P}_h,f\rangle = \langle \mu,\mathcal{P}_h f\rangle$. Now for any measurable set $A\subset[h,\infty)$, the definition of the operator $\mathcal{P}_h$ provides
\[(\mu\mathcal{P}_h)(A) = \langle \mu, \mathcal{P}_h(\1_A)\rangle = 0\]
so $\supp \mu\mathcal{P}_h \subset [0,h)$ for any measure $\mu$. In contrast, for a measurable set $A\in[0,h)$, let us first recall that we denote
\[
A + kh = \left\{y\in\R_+\,:\, \exists x\in A,\, y = x + kh\right\}.
\]
Then one has
\[\mu\mathcal{P}_h(A) = \left\langle \mu,\sum_{x\in[0,h)} \1_A(x)\mathds{1}_{\mathfrak{C}_x} \right\rangle = \left\langle \mu,\sum_{x\in A} \mathds{1}_{\mathfrak{C}_x} \right\rangle = \left\langle \mu,\sum_{k=0}^\infty \1_{A+kh} \right\rangle = \sum_{k=0}^\infty \mu\left(A + kh\right)\]
and the claim on $\mathcal{P}_h$ is proved.
\end{proof}
We are now ready to prove the main result of this paper.
\begin{proof}[Proof of Theorem~\ref{thm:main}]
Since $\mu_t = \mu M_{0,t} = \mu N_{\theta(t)}$, Proposition~\ref{prop:decay_mu_N} states that
\[\|\mu_t - \muin \mathcal{P}_h\|_V \leq C\e^{-\lambda \theta(t)}\|\muin - \muin \mathcal{P}_h\|_V.\] Using the estimate
\[
\mu_t([h,\infty)) \geq \frac{\muin([h,\infty))}{1+\frac{\muin([h,\infty))\eta}{3}t},
\]
provided in~\cite[Proposition 3]{Duteil2021}, we obtain that
\[\theta(t) \geq \log\left(1+\frac{\eta \muin([h,\infty))}{3}t\right)\]
and then easily reach the claimed statement.
\end{proof}
We shall now give the limit $\mu \mathcal{P}_h$ on some examples. Any measure having a density of the forme $\mathds{1}_{[kh,(k+1)h)}$ with respect to the Lebesgue measure on $\R_+$ has the same limit in large time $\mathds{1}_{[0,h)}$. Now let considere $\alpha >0$ and an exponential distribution $\mu$ with parameter $\alpha$, namely a measure with density $f(y) = \alpha\e^{-\alpha y}$ with respect to the Lesbesgue measure. Then for $0\leq a < b < h$, we compute
\begin{align*}
\mu \mathcal{P}_h([a,b)) &= \sum_{k=0}^\infty \int_{a + kh}^{b+kh} \lambda \e^{-\alpha y}\d y \\
& = \sum_{k=0}^\infty \left(\e^{-\alpha(a+kh)} - \e^{-\alpha(b+kh)}\right)\\
& = \frac{\e^{-\alpha a} - \e^{-\alpha b}}{1 - \e^{-\alpha h}}
\end{align*}
and thus for all $f\in\mathcal{B}(\R_+)$, one has \[\langle\mu\mathcal{P}_h,f\rangle = \frac{\mu\left(f_{|[0,h)}\right)}{\mu([0,h))},\]
that is to say that $\mu\mathcal{P}_h$ is an exponential distribution truncated on $[0,h)$.
\begin{remark}
It is worth noticing that we cannot expect a convergence in the space $\M_{1+Id}(\R_+)$. Indeed, considere the space of Borel functions on $\R_+$ that satisfy
\[
\sup_{y\in\R_+} \frac{|f(y)|}{1+y} < \infty.
\]
Computing formally the total wealth at the limit in time $\langle\mu\mathcal{P}_h,Id\rangle$ for a nonnegative measure $\mu\in \M_{1+Id}(\R_+)$, we obtain
\[\langle\mu,\mathcal{P}_h Id\rangle = \sum_{k=0}^\infty \int_{kh}^{(k+1)h}(y-kh)\d \mu(y) = \langle\mu,Id\rangle - h\sum_{k=0}^\infty k\mu([kh,(k+1)h))\]
so if $\mu([h,\infty)) > 0$, the total wealth in large time is strictly smaller than the total wealth at any finite time. This loss of wealth forbids a convergence in the space of measure with weight $1+Id$. However, we can provide an interpretation to this equality. The term
\[h\sum_{k=0}^\infty k\mu([kh,(k+1)h))\] corresponds to the wealth lost by players who reach an absorbing state (\textit{i.e.} wealth smaller than $h$) and is kept by the infinitely small proportion of the population who are infinitely rich.
\end{remark}
\subsection{Behavior of the asymptotic measure when the exchange parameter $h$ vanishes}

In~\cite{Duteil2021}, the authors studied the behavior as $h$ vanishes of Equation~\eqref{eq:pde} under the diffusive scaling. Here, we give a short result on what happens when $h$ vanishes, once the limit in large time is taken.
\begin{prop}
Denoting $\mathcal{P}_0$ the function defined on $M(\R_+)$ by $\mu\mathcal{P}_0 = \mu(\R_+)\delta_0$, one has
\[
\normiii{\mathcal{P}_h - \mathcal{P}_0}_{BL^*} \leq h.
\]
\end{prop}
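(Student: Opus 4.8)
The plan is to estimate the operator norm directly from its definition by testing both projectors against an arbitrary measure $\mu$ and an arbitrary test function $f$ with $\|f\|_{BL(1)} \leq 1$. Since the operator norm is
\[
\normiii{\mathcal{P}_h - \mathcal{P}_0}_{BL^*} = \sup_{\|\mu\|_{BL^*}\leq 1} \frac{\|\mu(\mathcal{P}_h - \mathcal{P}_0)\|_{BL^*}}{\|\mu\|_{BL^*}},
\]
and each $\|\cdot\|_{BL^*}$ is itself a supremum over admissible test functions, I would unwind both suprema at once and aim to bound $\langle \mu(\mathcal{P}_h - \mathcal{P}_0), f\rangle$ for fixed $\mu$ and $f$. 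Passing through the duality identity $\langle \mu\mathcal{P}_h, f\rangle = \langle \mu, \mathcal{P}_h f\rangle$ and the analogous $\langle \mu\mathcal{P}_0, f\rangle = \langle \mu, f(0)\mathds{1}_{\R_+}\rangle = f(0)\mu(\R_+)$, the quantity to control becomes $\langle \mu, \mathcal{P}_h f - f(0)\mathds{1}_{\R_+}\rangle$.

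First I would compute $\mathcal{P}_h f - f(0)\mathds{1}_{\R_+}$ pointwise. By the definition of $\mathcal{P}_h$, for $y \in [kh,(k+1)h)$ one has $(\mathcal{P}_h f)(y) = f(y - kh)$, where $y - kh = \mathrm{mod}(y,h) \in [0,h)$. Hence
\[
\left|(\mathcal{P}_h f)(y) - f(0)\right| = \left|f(\mathrm{mod}(y,h)) - f(0)\right| \leq |f|_{Lip}\,\mathrm{mod}(y,h) \leq |f|_{Lip}\, h,
\]
using that $f$ is Lipschitz and that the fractional part lies in $[0,h)$. Since $\|f\|_{BL(1)} \leq 1$ forces $|f|_{Lip} \leq 1$, the sup-norm of $\mathcal{P}_h f - f(0)\mathds{1}_{\R_+}$ is bounded by $h$. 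The remaining step is to check that this difference is an admissible test function for the $BL^*$ norm of $\mu$, up to the factor $h$: it is continuous at the interfaces $y = kh$ precisely because each piece of $\mathcal{P}_h f$ matches $f$ restricted to $[0,h)$ and the value $f(h^-)$ glues to $f(0)$ only in the relevant sense — I would verify its global Lipschitz constant is again controlled by $|f|_{Lip}$, so that $\|\mathcal{P}_h f - f(0)\mathds{1}_{\R_+}\|_{BL(1)} \leq h\,\|f\|_{BL(1)}$. Then $|\langle \mu, \mathcal{P}_h f - f(0)\mathds{1}_{\R_+}\rangle| \leq h\,\|\mu\|_{BL^*}$, and taking suprema over $f$ and normalizing by $\|\mu\|_{BL^*}$ yields the claim.

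The main obstacle I anticipate is the continuity and Lipschitz bookkeeping of $\mathcal{P}_h f$ across the gluing points $y = kh$: the operator $\mathcal{P}_h$ copies $f|_{[0,h)}$ periodically, and unless $f(h^-) = f(0)$ the resulting function $\mathcal{P}_h f$ has jumps, so it is \emph{not} itself a legitimate Lipschitz test function. The clean fix is that one need not feed $\mathcal{P}_h f$ directly into the $BL^*$ pairing for $\mu$; rather, since $\mu(\mathcal{P}_h - \mathcal{P}_0)$ is supported in $[0,h)$ (because $\mathcal{P}_h$ and $\mathcal{P}_0$ both produce measures carried by $[0,h)$), one can restrict attention to the pairing on $[0,h)$, where $\mathrm{mod}(y,h) = y$ and no gluing issue arises, and the pointwise bound $|\,\mathrm{mod}(y,h)\,| \leq h$ suffices immediately. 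I would therefore route the argument through the support restriction first, reducing everything to the estimate $|f(y) - f(0)| \leq |f|_{Lip}\,y \leq h$ on $[0,h)$, which sidesteps the discontinuity entirely and delivers the bound $h$ cleanly.
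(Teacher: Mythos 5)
You have correctly reproduced the computation that the paper itself uses --- the pointwise bound $|\mathcal{P}_h f(y)-f(0)|\le |f|_{Lip}\,\mathrm{mod}(y,h)\le h$ coming from the $h$-periodicity of $y\mapsto\mathcal{P}_h f(y)-f(0)$ --- and, to your credit, you have spotted the point that the paper's proof passes over in silence: $\mathcal{P}_h f-f(0)\1_{\R_+}$ is in general discontinuous at the points $kh$, $k\ge1$ (unless $f(h)=f(0)$), so it is not an admissible test function and the pointwise bound does not by itself convert into an estimate against $\|\mu\|_{BL^*}$. The problem is that your proposed repair does not close this gap. The pairing you must control is $\langle\mu,\mathcal{P}_h f-f(0)\1_{\R_+}\rangle$, and it runs over the support of $\mu$, not over the support of $\mu(\mathcal{P}_h-\mathcal{P}_0)$; since $\mu$ charges all of $\R_+$, you cannot ``restrict attention to $[0,h)$'': the values of $\mathcal{P}_h f$ near the gluing points $kh$ with $k\ge1$ genuinely enter the estimate, and there the function jumps by $|f(h)-f(0)|$, which is of order $1$, not $O(h)$.

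Moreover, no repair can exist for the inequality in the form $\|\mu(\mathcal{P}_h-\mathcal{P}_0)\|_{BL^*}\le h\,\|\mu\|_{BL^*}$. Take $\mu_\epsilon=\delta_h-\delta_{h-\epsilon}$: then $\|\mu_\epsilon\|_{BL^*}\le\epsilon\to0$, while $\mu_\epsilon\mathcal{P}_h-\mu_\epsilon\mathcal{P}_0=\delta_0-\delta_{h-\epsilon}$ (since $\delta_h\mathcal{P}_h=\delta_0$, $\delta_{h-\epsilon}\mathcal{P}_h=\delta_{h-\epsilon}$ and $\mu_\epsilon(\R_+)=0$) has $BL^*$ norm bounded below uniformly in $\epsilon$, so the ratio $\|\mu_\epsilon(\mathcal{P}_h-\mathcal{P}_0)\|_{BL^*}/\|\mu_\epsilon\|_{BL^*}$ blows up: $\mathcal{P}_h$ transports mass over macroscopic distances in a way that is discontinuous in the starting point, hence is not $BL^*$-bounded. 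What your pointwise estimate (and the paper's) actually delivers is $|\langle\mu,\mathcal{P}_h f-f(0)\1_{\R_+}\rangle|\le h\,\|\mu\|_{TV}$, i.e.\ the mixed bound $\|\mu\mathcal{P}_h-\mu\mathcal{P}_0\|_{BL^*}\le h\,\|\mu\|_{TV}$. That is the statement your argument (and the displayed inequality in the paper's proof) proves, and it is the form in which the conclusion should be stated; the step from there to the $\normiii{\cdot}_{BL^*}$ operator norm as defined in the statement is not ``easily completed'' --- it is false.
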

\begin{proof}
Let $f\in\Cr^1(\R_+)$ with $\|f\|_{BL(V)}\leq 1$. The function $y\mapsto \mathcal{P}_hf(y) - f(0)$ is $h-$periodic so
\[\left|\mathcal{P}_hf - \mathcal{P}_0f\right|\leq \|f\|_{BL(V)}|x-0|\leq h.\]
We deduce that for $\mu\in BL^*(V)$ one has $\|\mu\mathcal{P}_h - \mu\mathcal{P}_0\|_{BL^*(V)} \leq h\|\mu\|_{BL^*(V)}$ and the proof is easily completed from here.
\end{proof}
Of note, the rate $h$ is not optimal. Indeed, continuating our previous examples, we obtain for $\d \mu(x) = \alpha\e^{-\alpha x}\d x$ that
\[
\|\mu\mathcal{P}_h - \mu\mathcal{P}_0\|_{BL^*(V)} \leq \frac{\alpha}{1-\e^{-\alpha h}}\frac{h^2}{2} \sim_{h\to 0} \frac{h}{2},
\]
which is independant of the parameter $\lambda$. For $\d \mu(x) = \1_{[0,h)}$, we obtain a quadratic decay
\[
\|\mu\mathcal{P}_h - \mu\mathcal{P}_0\|_{BL^*(V)} \leq \frac{h^2}{2}.
\]
\section{Numerical illustrations\label{sec:num}}

In this section, we take advantage of the cases in which the limit is easily written to study the rate of convergence. In all our examples, we take $h=0.5$. We used a first order finite differences schemeto discretize the PDE, and the Simpson quadrature method to compute $\mu_t([h,\infty))$ at each step. The particular form of Equation~\eqref{eq:pde} enables to increase the time step as $\mu_t([h,\infty))$ vanishes in time. The weight function is depicted on Figure~\ref{fig:weight}.
\begin{figure}
\centering
\includegraphics[width=\textwidth]{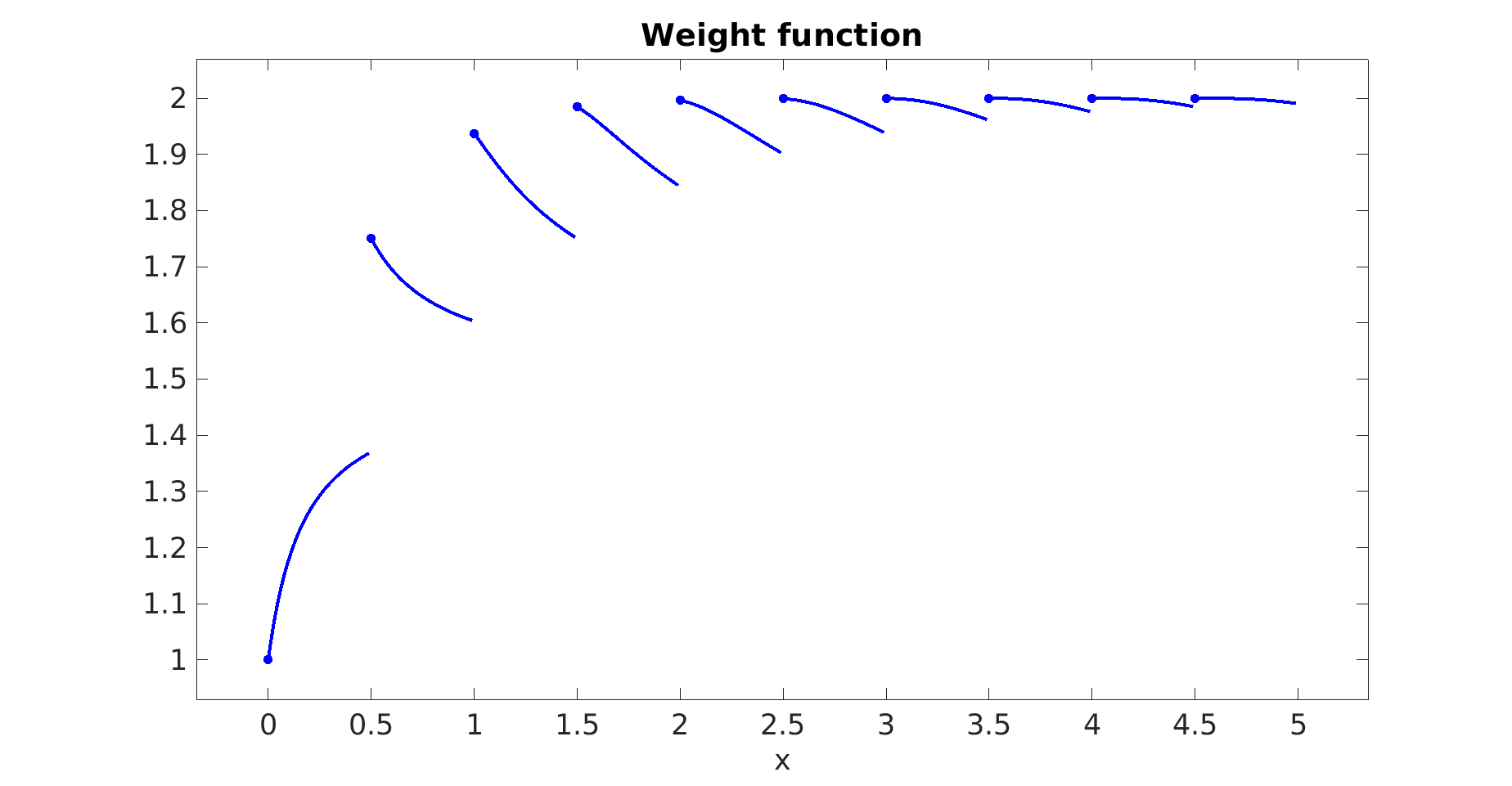}
\caption{\label{fig:weight}The discontinuous weight function $V$. The dots represent the points on which $V$ is left continuous only, \textit{i.e.} the multiples of $h$.}
\end{figure}

\subsection{Parameters $\lambda$ and $C$}

Following the lines of~\cite{Canizo2021}, we give exact values for the parameters $\lambda$ and $C$. In this paper, the authors obtain an exponential decay for a semigroup $(N_t)_{t\geq 0}$ by applying Harris' theorem to the operator $N_T$, \textit{i.e.} at a certain time $T>0$. The constants are then given by
\[
C(T) := \frac{C_V \e^{\omega_V T}}{\gamma}\frac{1+\beta}{\beta}, \qquad \lambda(T) := -\frac{\log \gamma}{T} >0,
\]
with $C_V$ and $\omega_V$ from~\eqref{stoch-semigrp} and $\beta$ and $\lambda$ obtained as follows. First, $\beta$ is the unique positive root of
\[
K\beta^2 + \left(\gamma_H - \gamma_L - K\left(1-\frac{1}{A}\right)\right)\beta + \gamma_H - 1 = 0,
\]
with $\gamma_L$ and $K$ given by the Lyapunov condition and $A$ and $\gamma_H$ by the local coupling condition. In turn, we can obtain
\[
\gamma = \max \left\{\gamma_H + \beta K, 1 - \frac{\beta}{1 + \beta}(1 - \gamma_L - K/A)\right\}
\]
and then can compute the coefficients $C$ and $\lambda$.
\\
In our case, recall first that we have $C_V = 1$ and $\omega_V = 0$. The other constants are
\begin{itemize}
\item[] $\gamma_L(T) = \e^{-\sigma T}$
\item[] $K(T) = 2(1 - \e^{-\sigma T})$
\item[] $\gamma_H(T) = \frac{1 + \e^{-2 T}}{2}$
\item[] $A = 3$
\end{itemize}
with $\sigma > 0$ and $T>0$ to be fixed. The choice $\sigma=2$ makes the computations dramatically easier, and leads to
\[
\beta = \frac{\sqrt{265} - 11}{24} \simeq 0.21995, \qquad \gamma(T) = 1 - B\left(1-\e^{-2T}\right)
\]
with $B:=\frac{\beta}{3(1+\beta)}$ and finally
\[
\lambda(T) = -\frac{\log \left(1 - B\left(1-\e^{-2T}\right)\right)}{T}.
\]
We easily see that the function $\gamma$ is decreasing, and fortunately so is $\lambda$. To see it, we first compute its derivative
\[
\lambda'(T) = \frac{1}{T^2}\left[\frac{2BT\e^{-2T}}{1 - B\left(1-\e^{-2T}\right)} + \log \left(1 - B\left(1-\e^{-2T}\right)\right)\right]
\]
so $\lambda'(T) \leq 0$ is equivalent to
\[
\frac{2BT\e^{-2T}}{1 - B\left(1-\e^{-2T}\right)} \leq -\log \left(1 - B\left(1-\e^{-2T}\right)\right).
\]
Since $-\log(1-X) \geq X$ for any $X\leq 1$, it is enough to prove that
\[
\frac{2BT\e^{-2T}}{1 - B\left(1-\e^{-2T}\right)} \leq B\left(1-\e^{-2T}\right),
\]
which is equivalent to
\[
g_1(T) := (1+2T)(1-\e^{-2T}) - B(1-\e^{-2T})^2 - 2T \geq 0.
\]
One can show that $g_1'(T) \geq 0$ is equivalent to $T \geq B(1-\e^{-2T})$. Now we set $g_2(T) := T - B(1-\e^{-2T})$. On the one hand, one has $g_1(0) = g_2(0) = 0$, and on the other
\[
g_2'(T) \geq 0\, \Longleftrightarrow\, T \geq \frac{\log 2B}{2}
\]
which is true. Thus, $g_2(T) \geq 0$ for all $T\geq 0$, and in turn $g_1(T) \geq 0$ for all $T\geq 0$, so finally $T\mapsto \lambda(T)$ is decreasing.
\\
\textit{In fine}, one can take the infimum on $T$ for $C(T)$ and the supremum for $\lambda(T)$ to obtain
\[
C := \frac{1+\beta}{\beta} = \frac{\sqrt{265} + 13}{\sqrt{265} - 11} \simeq 5.5465, \qquad \lambda := \frac{2}{3C} \simeq 0.1202.
\]
\subsection{Square functions}
The first and simpler case is square functions. We considere three cases with the same asymptotic measure, in order to compare their decay to this measure. More precisely, we take initial (probability) measures with density respectively
\begin{enumerate}
\item $\frac{1}{h}\1_{[h,2h)} = 2\1_{[0.5,1)}$
\item $\frac{1}{h}\1_{[4h,5h)} = 2\1_{[2,2.5)}$
\item $\frac{1}{h}\1_{[8h,9h)} = 2\1_{[4.5,5)}$
\end{enumerate}
with respect to the Lebesgue measure. In each case, one has $\muin([h,\infty)) = 1$, so the decay rate in the right hand side of~\eqref{decay} is equal. The very small difference that can be seen between the lines on Figure~\ref{fig:subgeom_decay_cren} is explained by the sligth difference in $\|\mu - \mu\mathcal{P}_h\|_V$ coming from the fact that the weight function $V$ is not constant.
\begin{figure}
\centering
\includegraphics[width=\textwidth]{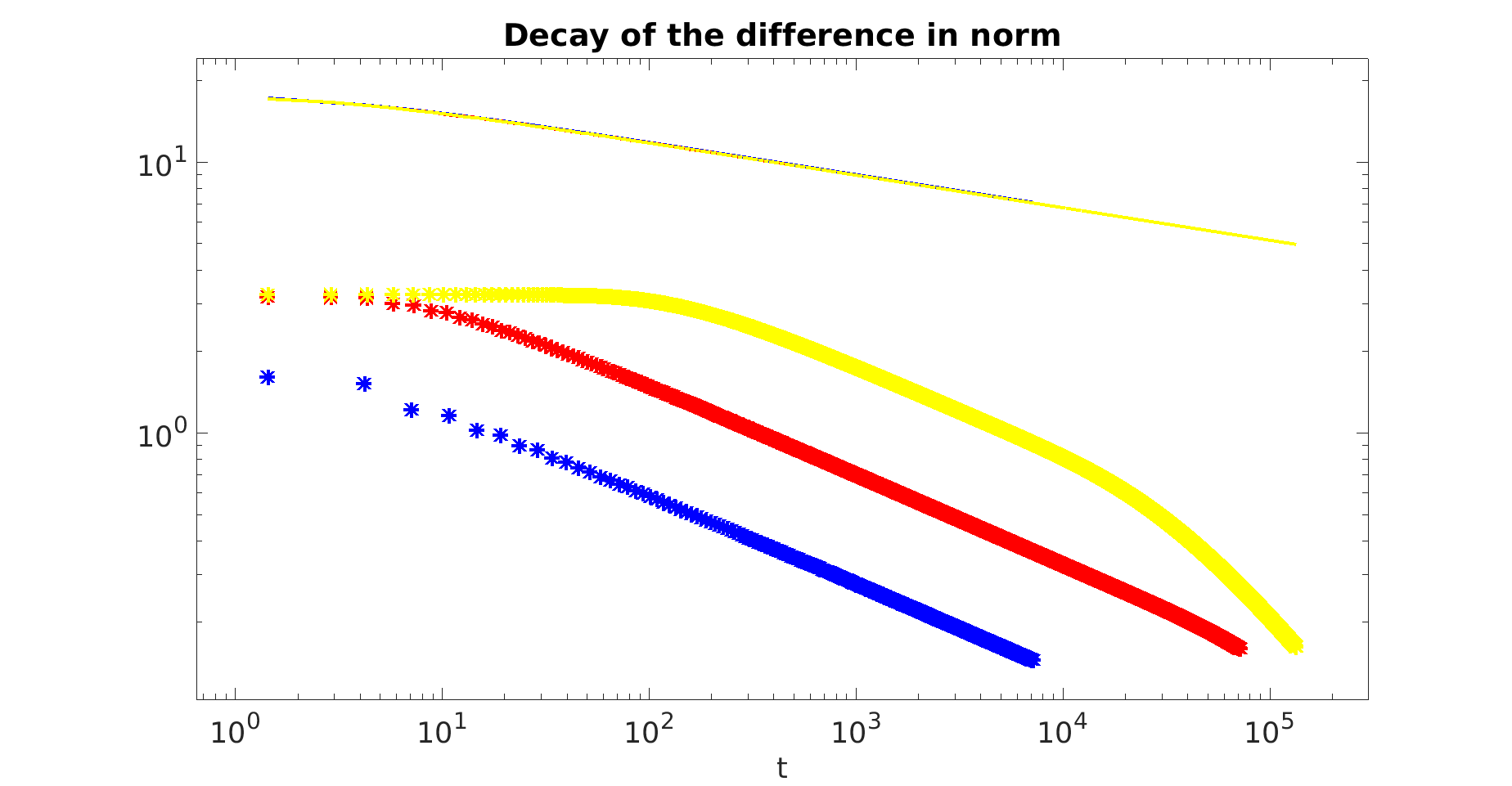}
\caption{\label{fig:subgeom_decay_cren}Loglog plot of the decay of the terms in both sides of~\eqref{decay} for square functions as initial distributions. The simulations were stopped when the quantity $\|\mu_t - \mu \mathcal{P}_h\|_V$ reached $5\%$ of its initial value. Stars: weighted norm of the difference between $\mu_t$ and $\mu\mathcal{P}_h$. Lines: decay rate times the initial difference in weighted norm. Blue: $\muin = 2\1_{[0.5,1)}$. Red: $\muin = 2\1_{[2,2.5)}$. Yellow: $\muin = 2\1_{[4.5,5)}$.}
\end{figure}
\subsection{Exponential probability distribution}

The next case is exponential probability distribution, namely initial measures with density with respect to the Lebesgue measure of the form $x\mapsto \alpha \e^{-\alpha x}$. We will take $\alpha = 0.25$, $1$ and $4$. One has $\muin([h,\infty)) = 1 - \e^{-\alpha h}$, and thus unlike the previous case of square functions we considere, this quantity depends on the initial measure. This fact is illustrated on Figure~\ref{fig:subgeom_decay_exp} on which the lines are distinct (to be compared to the ones on Figure~\ref{fig:subgeom_decay_cren}).
\begin{figure}
\centering
\includegraphics[width=\textwidth]{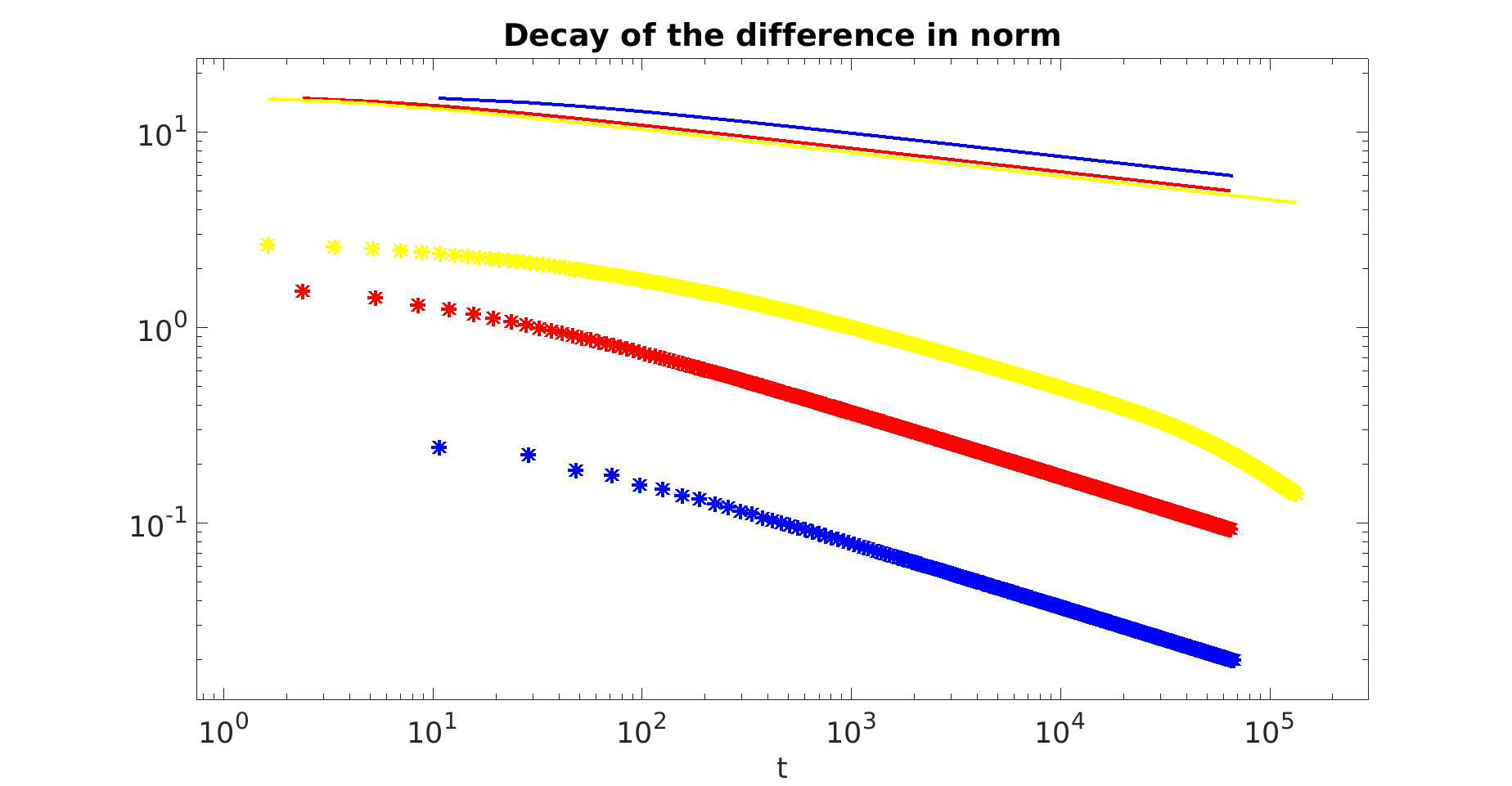}
\caption{\label{fig:subgeom_decay_exp}Loglog plot of the decay of the terms in both sides of~\eqref{decay} for exponential initial distributions. The simulations were stopped when the quantity $\|\mu_t - \mu \mathcal{P}_h\|_V$ reached $5\%$ of its initial value. Stars: weighted norm of the difference between $\mu_t$ and $\mu\mathcal{P}_h$. Lines: decay rate times the initial difference in weighted norm. Blue: $\alpha = 4$. Red: $\alpha = 1$. Yellow: $\alpha = 0.25$.}
\end{figure}

\section{Conclusion and further works\label{sec:con}}

In this work, we investigated the wellposedness and long-time behavior of a mean-field model proposed in~\cite{Duteil2021}. It describes a large population of players that meet, make transcient pairs and then possibly exchange money, depending on the result of a rock-paper-scissors game in which both players select their strategies uniformly at random. The resulting equation is a non local integrodifferential equation that is non linear and solved in a measure setting by mean of duality, after solving the adjoint equation and express its solution as a semigroup acting on an initial condition is the space of functions. Taking advantage of this duality approach, we could provide a precise description of the asymptotic behavior of the solution, that has a subgeometric decay rate with coefficients explicitely computed.
\\
A possible continuation of this paper would be to obtain optimal constants for the decay rate. Indeed, the choice $\sigma=2$ cancelled most of the dependance in $T$ in the coefficients appearing in the computations of the constants $C$ and $\lambda$. Figures~\ref{fig:subgeom_decay_cren} and~\ref{fig:subgeom_decay_exp} suggest that there is room for improvement. Other approaches to improve these constants could be finding a better lower bound for $\mu_t([h,\infty))$ or improving the constants in the local coupling condition.
\\
One might also go further in the transposition in the measure setting of the results from~\cite{Duteil2021}. Indeed, it seems unclear at the moment how the diffusive scalling introduced in the aforementionned article would interact with the definition of measure solution we adopted in  the present paper. In addition, the asymptotic behavior of the resulting equation would enable to know if the diagram depicted on Figure~\ref{fig:diag} is `commutative'. Such theoretical work con benefit from its numerical counterpart. Indeed, providing an asymptotic preserving scheme (in $h$) that is also able to capture the dynamics in large time would be alone an interesting challenge, ans in turn give clues about the expected theoretical behavior to expect.
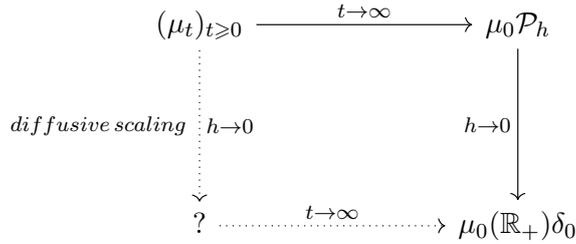
\begin{figure}
\begin{center}
\begin{tikzcd}
 (\mu_t)_{t\geq0} \arrow["t\to \infty"]{r} \arrow["diffusive\, scaling \;"',dotted]{d}{h\to 0}\& \mu_0 \mathcal{P}_h \arrow["h\to 0"']{d}{} \\
? \arrow["t\to \infty", dotted]{r} \& \mu_0 (\R_+) \delta_0
\end{tikzcd}
\end{center}
\caption{\label{fig:diag}(Commutative?) Diagram of the involved asymptotic behaviors. The top and right arrows are the subject of the present paper in the measure setting, while the left arrow was studied in~\cite{Duteil2021} in a $\mathrm{L}^1$ setting.}
\end{figure}
\\
Another way to pursue this work might be to propose and study variants on the present model. First, one might obviously change the rules to exchange money once the transcient pairs are formed. The classical \textit{matching pennies} game for instance would almost not modify the equation, leading to a mere change of a $3$ into a $2$ in the decay rate. A more interesting situation would result from such a zero-sum game in which the optimal strategy is not a uniform mixed strategy. One may also change the meeting and pair-making rules, for example imposing that a player can only initiate a game with another player that has a similar wealth, in the spirit of bounded confidence in opinion formation models~\cite{Deffuant2000,Pinasco2017}.
\\
The `concentration' effect in large time was already noted in~\cite{Duteil2021}. An interesting goal might be to add an extra term in the PDE in order to avoid it. This could be done by mean of a transport term, that might be negative for the wealthiest players (\textit{i.e.} taxes) and positive for the poorest (\textit{i.e..} social welfare). Such modification in the model would appeal for a dramatic change in the proofs, since we crucially used the fact that players remain in the same 'class of wealth' at any time. 
It seems likely that the asymptotic measure would be supported on a set larger than $[0,h)$. Particular choices of the `taxes and welfare parameters' might possibly lead to a dynamical equilibrium, in the spirit of~\cite{BDG,Gabriel2022}.
\\
Finally, a classical yet necessary complementary work would be to derive the mean-field equation from an agent based model. We might expect from such a model a dynamic similar to the one happeing in gambler's ruin type situation, except in this case a single player would own most of the money. This feature does not occure in the mean-field equation, and it would be interesting to understand where precisely it becomes impossible as the number of agents goes to infinity.

\section*{Acknowledgments}

The author is very grateful to Nastassia Pouradier Duteil for fruitful discussions on this problem and critical reading of the first manuscript.

\printbibliography

\end{document}